\documentclass[12pt,oneside,reqno]{amsart}

\hoffset=-0.7in \textwidth=6.4in \textheight=8.8in

\usepackage{amsmath,amssymb,amsthm,textcomp}
\usepackage{dsfont}
\usepackage{amsfonts,graphicx}
\usepackage[mathscr]{eucal}
\pagestyle{plain}
\usepackage{color}
\usepackage{csquotes}
\usepackage{relsize}
\usepackage{hyperref}
\usepackage[backend=bibtex,%
firstinits=true,%
doi=false,%
isbn=true,%
url=false,%
maxnames=99]{biblatex}%

\vfuzz=30pt
\AtEveryBibitem{\clearfield{issn}}
\AtEveryCitekey{\clearfield{issn}}
\addbibresource{mybibfile.bib}
\interdisplaylinepenalty=0

\numberwithin{equation}{section}
\DeclareNameAlias{sortname}{last-first}
\theoremstyle{definition}
\usepackage{mathtools}
\addtolength{\topmargin}{-0.7in}
\addtolength{\textheight}{0.4in}

\numberwithin{equation}{section}


\newcommand{\ncom}{\newcommand}

\ncom{\beq}{\begin{equation}}
	\ncom{\eeq}{\end{equation}}
\ncom{\bea}{\begin{eqnarray*}}
	\ncom{\eea}{\end{eqnarray*}}
\ncom{\beqa}{\begin{eqnarray}}
	\ncom{\eeqa}{\end{eqnarray}}
\ncom{\nno}{\nonumber}
\ncom{\non}{\nonumber}
\ncom{\ds}{\displaystyle}
\ncom{\half}{\frac{1}{2}}
\ncom{\mbx}{\makebox{.25cm}}
\ncom{\hs}{\mbox{\hspace{.25cm}}}
\ncom{\rar}{\rightarrow}
\ncom{\Rar}{\Rightarrow}
\ncom{\noin}{\noindent}
\ncom{\bc}{\begin{center}}
	\ncom{\ec}{\end{center}}
\ncom{\sz}{\scriptsize}
\ncom{\rf}{\ref}
\ncom{\s}{\sqrt{2}}
\ncom{\sgm}{\sigma}
\ncom{\Sgm}{\Sigma}
\ncom{\psgm}{\sigma^{\prime}}
\ncom{\dt}{\delta}
\ncom{\Dt}{\Delta}
\ncom{\lmd}{\lambda}
\ncom{\Lmd}{\Lambda}
\ncom{\Th}{\Theta}
\ncom{\e}{\eta}
\ncom{\eps}{\epsilon}
\ncom{\pcc}{\stackrel{P}{>}}
\ncom{\lp}{\stackrel{L_{p}}{>}}
\ncom{\dist}{{\rm\,dist}}
\ncom{\sspan}{{\rm\,span}}
\ncom{\re}{{\rm Re\,}}
\ncom{\im}{{\rm Im\,}}
\ncom{\sgn}{{\rm sgn\,}}
\ncom{\ba}{\begin{array}}
	\ncom{\ea}{\end{array}}
\ncom{\hone}{\mbox{\hspace{1em}}}
\ncom{\htwo}{\mbox{\hspace{2em}}}
\ncom{\hthree}{\mbox{\hspace{3em}}}
\ncom{\hfour}{\mbox{\hspace{4em}}}
\ncom{\vone}{\vskip 2ex}
\ncom{\vtwo}{\vskip 4ex}
\ncom{\vonee}{\vskip 1.5ex}
\ncom{\vthree}{\vskip 6ex}
\ncom{\vfour}{\vspace*{8ex}}
\ncom{\norm}{\|\;\;\|}
\ncom{\integ}[4]{\int_{#1}^{#2}\,{#3}\,d{#4}}
\ncom{\vspan}[1]{{{\rm\,span}\{ #1 \}}}
\ncom{\dm}[1]{ {\displaystyle{#1} } }
\ncom{\ri}[1]{{#1} \index{#1}}

\newtheorem{theorem}{\bf Theorem}[section]
\newtheorem{remark}{\bf Remark}[section]
\newtheorem{proposition}{Proposition}[section]
\newtheorem{lemma}{Lemma}[section]

\newtheoremstyle
{remarkstyle}
{}
{11pt}
{}
{}
{\bfseries}
{:}
{     }
{\thmname{#1} \thmnumber{#2} }

\theoremstyle{remarkstyle}



\def\eps{\varepsilon}

\begin{document}
\title{On Mixed Time-Changed Erlang Queue}
		\author[Rohini Bhagwanrao Pote]{Rohini Bhagwanrao Pote}
		\address{Rohini Bhagwanrao pote, Department of Mathematics, Indian Institute of Technology Bhilai, Durg 491002, India.}
		\email{rohinib@iitbhilai.ac.in}
		\author[Kuldeep Kumar Kataria]{Kuldeep Kumar Kataria}
		\address{Kuldeep Kumar Kataria, Department of Mathematics, Indian Institute of Technology Bhilai, Durg 491002, India.}
		\email{kuldeepk@iitbhilai.ac.in}
	\subjclass[2020]{Primary: 60K15; 60K20; Secondary: 60K25; 26A33}
	\keywords{Erlang service distribution; state probabilities;  mixed inverse stable subordinator. Mittag-Leffler function.}
		\date{\today}
\begin{abstract}
	We study a time-changed variant of the Erlang queue by taking the first hitting time of a mixed stable subordinator as the time-changing component. We call it the mixed time-changed Erlang queue. We derive the system of fractional differential equations that governs its state probabilities. The explicit expressions for the state probabilities of mixed time-changed Erlang queue and their Laplace transform are derived. 
	Equivalently, it is represented in terms of phases and its mean queue length is obtained. Also, some distributional properties of the mixed time-changed Erlang queue such as the distribution of its inter-arrival times, inter-phase times, service times and busy period are derived. Later, its conditional waiting time is discussed and two plots of sample paths simulation are presented.


\end{abstract}
\maketitle 
\section{Introduction}
The Erlang queue is a queueing system in which the arrival of customers is modeled according to a Poisson process with rate $\lambda$ and its service system has Erlang distribution with shape parameter $k$ and mean $1/\mu$. Its inter-arrival times are exponentially distributed with parameter $\lambda$ and its service system consists of $k$ phases, where the service time of each phase is exponentially distributed with parameter $k\mu$.
In the fields of finance and telecommunication, the Erlang queues are quite useful. In call centers, the Erlang queue is used to predict number of agents required to handle incoming calls (see Spath and F{\"a}hnrich (2006)). Cahoy {\it et al.} (2015) used it to model financial data.

In Luchak (1956), (1958) a single channel queue with Poisson arrivals and a general
class of service-time distributions is studied. The transient solution of Erlang queue is obtained in Griffiths {\it et al.} (2006).
Cahoy {\it et al.} (2015) proposed the first fractional generalization of $M/M/1$ queue where they obtained its state probabilities and an algorithm to simulate fractional $M/M/1$ queue.
Di Crescenzo {\it et al.} (2003) studied $M/M/1$ queue with catastrophes and its fractional variant is discussed in Ascione {\it et al.} (2018). Giorno {\it et al.} (2018) studied a single-server queueing system with Poisson arrivals and state-dependent service mechanism characterized by logarithmic steady-state distribution.
The Erlang queue time-changed with inverse stable subordinator is studied by Ascione {\it et al.} (2020) where its state probabilities, mean queue length along with the distributions its of busy period, inter-arrival, inter-phase and service times are derived.

Let $\mathbb{N}$ denote the set of positive integers.
The Erlang queue $\{\mathcal{Q}(t)\}_{t\geq0}$ with state space $\mathcal{H}_{0}=\mathcal{H} \cup \{(0,0)\}$, 
where $\mathcal{H}=\{(n,s) \in \mathbb{N} \times \mathbb{N}:  s \leq k \} $ can be described as follows: $\mathcal{Q}(t)=(\mathcal{N}(t),\mathcal{S}(t))$. Here, $\mathcal{N}(t)$ denotes the number of customers in the system at time $t \geq 0$ and $\mathcal{S}(t)$ is the phase of customer being served at time $t$ provided $\mathcal{N}(t)>0$. 
For $\mathcal{N}(t)=0$, we take $\mathcal{S}(t)=0$.

For $t \geq 0$, let us denote the transient state probabilities of Erlang queue as follows:
$p_{0}(t)=\mathrm{Pr}(\mathcal{Q}(t)=(0,0)|\mathcal{Q}(0)=(0,0))$
and
$p_{n,s}(t)=\mathrm{Pr}(\mathcal{Q}(t)=(n,s)|\mathcal{Q}(0)=(0,0))$, $(n,s) \in \mathcal{H}$.
That is, $p_{0}(t)$ is the probability that there is no customers in the system at time $t$ and $p_{n,s}(t)$ is the probability that there are
$n$ customers in the system at time $t$ and the customer being served is at phase $s$. 

Equivalently,
the Erlang queue can be represented in terms of phase count as a queue length process (see Ascione {\it{et al.}} (2020), p. 3252). Consider a bijective map $g_{k}:\mathcal{H}_{0}\rightarrow\mathbb{N}_{0}$ defined as follows:  
\begin{equation}\label{gk12}
	g_{k}(n,s)\coloneqq \left\{
	\begin{array}{ll}
		k(n-1)+s,\, (n,s)\in\mathcal{H},\\
		0,\, (n,s)=(0,0),
	\end{array}
	\right.  
\end{equation}
where $\mathbb{N}_{0}=\mathbb{N}\cup\{0\}$.
Its inverse map $(a_{k}(m),b_{k}(m))$ is such that
\begin{equation}\label{bk12}
	b_{k}(m)=\left\{
	\begin{array}{ll}
		\min\{s>0: s \equiv m \pmod{k} \},\, m>0,\\
		0,\, m=0,
	\end{array}
	\right.  
\end{equation}
and
\begin{equation}\label{ak12}
	a_{k}(m)=\left\{
	\begin{array}{ll}
		\frac{m-b_{k}(m)}{k}+1,\, m>0,\\
		0,\, m=0.
	\end{array}
	\right.  
\end{equation}
Then, $\mathcal{L}(t)=g_{k}(\mathcal{Q}(t))$ is the length of Erlang queue in terms of number of phases at time $t\geq0$. Its state probabilities are denoted by $p_{n}(t)=\mathrm{Pr}(\mathcal{L}(t)=n|\mathcal{L}(0)=0)$, $n\geq0$.
Note that $p_{n}(t)=p_{a_{k}(n),b_{k}(n)}(t)$.

It is well known that the occurrence of catastrophic events such as earthquakes, tsunami, {\it etc.} exhibits long memory. It is empirically proven that these real life situations can be modeled by time-changed processes as they exhibits long range behaviour. Recently, the time-changed processes are extensively studied by many researchers, for example, the time fractional Poisson process (see Beghin and Orsingher (2009), Meerschaert {\it et al.} (2011)), space fractional Poisson process (see Orsingher and Polito (2012)), time-changed birth-death processes (see Orsingher and Polito (2011), Kataria and Vishwakarma (2025)) {\it etc}. Beghin (2012) studied random-time processes governed by differential equations of fractional distributed order, Kataria and Khandakar (2021) studied mixed fractional risk process which are based on the inverse mixed stable subordinator.


In this paper, we introduce and study a time-changed variant of the Erlang queue where the time changing component used is the first hitting time of a mixed stable subordinator. We call it the mixed time-changed Erlang queue. The paper is organized as follows:

First, we give some preliminary results and known definitions of special functions, fractional derivative, subordinators and their inverse process, {\it etc.} 
Then, we define the mixed time-changed Erlang queue by time-changing it with inverse mixed stable subordinator which is the first hitting time of a mixed stable subordinator. The system of fractional differential equations that governs its state probabilities is derived. The Laplace transform of its state probabilities is obtained whose inversion yields its distribution. Also, the mixed time-changed Erlang queue is represented as a queue length process which characterizes it in terms of number of phases. The fractional differential equation that governs its mean queue length is derived, and an explicit expression for the mean queue length is obtained. Moreover, we derive the distribution of inter-arrival times, inter-phase times, service times and busy period of mixed time-changed Erlang queue, and discussed its conditional waiting time. Some plots of sample paths simulation are given.

\section{Preliminaries}\label{sec2}
Here, we collect some known definitions and results that will be used.
\subsection{Mittag-Leffler function}
The three-parameter Mittag-Leffler function is defined as (see Kilbas {\it et al.} (2006))
\begin{equation}\label{Mittag12}
	E_{\alpha,\beta}^{\gamma}(t)\coloneqq\sum_{r=0}^{\infty}\frac{\Gamma(\gamma+r)t^{r}}{r!\Gamma(\gamma)\Gamma(\alpha r+\beta)},\,t\in\mathbb{R},\,\alpha,\,\beta,\,\gamma>0,
\end{equation}
where $\mathbb{R}$ denote set of real numbers.
Its Laplace transform is given by (see Haubold {\it et al.} (2011))
\begin{equation}\label{ltm12}
	\mathbb{L}(t^{\beta-1}E_{\alpha,\beta}^{\gamma}(\omega t^{\alpha}))(z)=\frac{z^{\alpha\gamma-\beta}}{(z^{\alpha}-\omega)^{\gamma}},\, \omega\in\mathbb{R},\,\,z>0,\,|\omega z^{\alpha}|<1.
\end{equation}

The following result will be used (see Haubold {\it et al.} (2011), Eq. (17.6)):
\begin{equation}\label{rholp}
	\mathbb{L}^{-1}\Big(\frac{z^{\rho-1}}{z^{\alpha}+az^{\beta}+b};t\Big)=t^{\alpha-\rho}\sum_{h=0}^{\infty}(-a)^{h}t^{(\alpha-\beta)h}E^{h+1}_{\alpha,\alpha+(\alpha-\beta)h-\rho+1}(-bt^{\alpha}),
\end{equation}
where $\alpha>\beta>0$, $\alpha-\rho+1>0$ and $|az^{\beta}/(z^{\alpha}+b)|<1$.
\subsection{Fractional derivative}
The Caputo fractional derivative of a function $g(\cdot)$ is defined as follows (see Kilbas {\it{et al.}} (2006)): 
\begin{equation}\label{caputo}
	\frac{\mathrm{d}^{\alpha}}{\mathrm{d}t^{\alpha}}g(t)=
	\begin{cases}
		\frac{1}{\Gamma(1-\alpha)}\int_{0}^{t}(t-y)^{-\alpha}g'(y)\mathrm{d}y,\, 0<\alpha<1, \\
		g'(y), \, \alpha=1.
	\end{cases}
\end{equation}
Its Laplace transform is given by
\begin{equation}\label{caputolp}
	\mathbb{L}\Big(\frac{\mathrm{d}^{\alpha}}{\mathrm{d}t^{\alpha}}g(t)\Big)(z)=z^{\alpha}\Tilde{g}(z)-z^{\alpha-1}g(0),
\end{equation} 
where $\tilde{g}(z)$ denotes the Laplace transform of a function $g(t)$.
\subsection{Stable subordinators and their inverse}
An $\alpha$-stable subordinator is a one dimensional L\'evy process $\{D_{\alpha}(t)\}_{t\geq0}$, $0<\alpha<1$ whose Laplace transform is given by $\mathbb{E}(e^{-zD_{\alpha}(t)})=e^{-tz^{\alpha}}$, $z>0$ (see Applebaum (2009)).
Its first hitting time process $\{Y_{\alpha}(t)\}_{t\geq0}$, where $Y_{\alpha}(t)=\inf\{s\geq0:D_{\alpha}(s)>t\}$ is known as the inverse $\alpha$-stable subordinator. The Laplace transform of its probability density function is given by (see Meerschaert {\it{et al.}} (2013))
\begin{equation*}\label{ltLnu12}
	\mathbb{L}(\mathrm{Pr}\{Y_{\alpha}(t)\in\mathrm{d}y\})(z)=z^{\alpha-1}e^{-yz^{\alpha}}\mathrm{d}y, \, z>0.
\end{equation*}

The mixed stable subordinator $\{D_{\alpha_{1},\alpha_{2}}(t)\}_{t\geq0}$ is a L\'evy process whose Laplace transform is given by 
\begin{equation}\label{mixdsub12}
	\mathbb{E}(e^{-zD_{\alpha_{1},\alpha_{2}}(t)})=e^{-t(c_{1}z^{\alpha_{1}}+c_{2}z^{\alpha_{2}})},\, z>0,
\end{equation}
where $c_{1}\geq0$, $c_{2}\geq0$, $c_{1}+c_{2}=1$, $0<\alpha_{2}<\alpha_{1}<1$. 
Its first hitting time process $\{Y_{\alpha_{1},\alpha_{2}}(t)\}_{t\geq0}$ is known as the inverse mixed stable subordinator, where
\begin{equation*}\label{invmixdsub12}
	Y_{\alpha_{1},\alpha_{2}}(t)=\text{inf}\{s\geq0:D_{\alpha_{1},\alpha_{2}}(s)>t\},\, t>0
\end{equation*}
with $Y_{\alpha_{1},\alpha_{2}}(0)=0$. For $c_{1},c_{2}\in\{0,1\}$, it reduces to inverse stable subordinator. The probability density function $f_{\alpha_{1},\alpha_{2}}(t,u)=\frac{\mathrm{d}}{\mathrm{d}u}\mathrm{Pr}(Y_{\alpha_{1},\alpha_{2}}(t)\leq u)$ of inverse mixed stable subordinator has the following Laplace transform:
\begin{equation}{\label{mixedlp12}}
	\int_{0}^{\infty}e^{-zt}f^{\alpha_{1},\alpha_{2}}(t,u)dt=\frac{c_{1}z^{\alpha_{1}}+c_{2}z^{\alpha_{2}}}{z}e^{-u(c_{1}z^{\alpha_{1}}+c_{2}z^{\alpha_{2}})},\,z>0.
\end{equation}
\section{Mixed Time-Changed Erlang Queue}\label{sec3}
In this section, we introduce a time-changed Erlang queue where the time changing component is the first hitting time of mixed stable subordinator. We call it the mixed time-changed Erlang queue. 

Let $\{\mathcal{Q}(t)\}_{t\geq0}$ be an Erlang queue, that is, $\mathcal{Q}(t)=(\mathcal{N}(t),\mathcal{S}(t))$, $t\geq0$, where $\mathcal{N}(t)$ denotes the number of customers in the system at time $t$ and $\mathcal{S}(t)$ denotes the phase of customer being served at time $t$. We define the mixed time-changed Erlang queue $\{\mathcal{Q}^{\alpha_{1},\alpha_{2}}(t)\}_{t\geq0}$, $0<\alpha_{2}<\alpha_{1}<1$ as follows: 
\begin{equation*}
	\mathcal{Q}^{\alpha_{1},\alpha_{2}}(t)\coloneqq\mathcal{Q}(Y_{\alpha_{1},\alpha_{2}}(t)),
\end{equation*}
where the Erlang queue $\{\mathcal{Q}(t)\}_{t\geq0}$ is independent of the inverse mixed stable subordinator $\{Y_{\alpha_{1},\alpha_{2}}(t)\}_{t\geq0}$.
For $t\geq0$, let us denote its state probabilities by
\begin{align*}
	p_{0}^{\alpha_{1},\alpha_{2}}(t)&=\mathrm{Pr}(\mathcal{Q}^{\alpha_{1},\alpha_{2}}(t)=(0,0)|\mathcal{Q}^{\alpha_{1},\alpha_{2}}(0)=(0,0)),\\
	p_{n,s}^{\alpha_{1},\alpha_{2}}(t)&=\mathrm{Pr}(\mathcal{Q}^{\alpha_{1},\alpha_{2}}(t)=(n,s)|\mathcal{Q}^{\alpha_{1},\alpha_{2}}(0)=(0,0)),\, (n,s)\in \mathcal{H},
\end{align*}
where $\mathcal{H}=\{(n,s) \in \mathbb{N} \times \mathbb{N}:  s \leq k \}$.	 
\begin{remark}
	For $c_{1}=1$ or $c_{2}=1$ in \eqref{mixdsub12}, the mixed time-changed Erlang queue reduces to the fractional Erlang queue (see Ascione {\it et al.} (2020)).
\end{remark} 
\begin{theorem}\label{DE12}
	 Let $c_{1}\geq0$, $c_{2}\geq0$ such that $c_{1}+c_{2}=1$ and $\frac{\mathrm{d}^{\alpha}}{\mathrm{d}t^{\alpha}}$ be the Caputo fractional derivative of order $\alpha$ as defined in \eqref{caputo}. Then,
	the state probabilities of mixed time-changed Erlang queue solves the following system of differential equations: 
\begin{align*}
	\Big(c_{1}\frac{\mathrm{d}^{\alpha_{1}}}{\mathrm{d}t^{\alpha_{1}}}+c_{2}\frac{\mathrm{d}^{\alpha_{2}}}{\mathrm{d}t^{\alpha_{2}}}\Big)p_{0}^{\alpha_{1},\alpha_{2}}(t)
	&=-\lambda p_{0}^{\alpha_{1},\alpha_{2}}(t)+k\mu p_{1,1}^{\alpha_{1},\alpha_{2}}(t),  \\
	\Big(c_{1}\frac{\mathrm{d}^{\alpha_{1}}}{\mathrm{d}t^{\alpha_{1}}}+c_{2}\frac{\mathrm{d}^{\alpha_{2}}}{\mathrm{d}t^{\alpha_{2}}}\Big)p_{1,s}^{\alpha_{1},\alpha_{2}}(t)
	&=-(\lambda + k \mu)p_{1,s}^{\alpha_{1},\alpha_{2}}(t)+k\mu p_{1,s+1}^{\alpha_{1},\alpha_{2}}(t), \, 1 \leq s \leq k-1, \\
	\Big(c_{1}\frac{\mathrm{d}^{\alpha_{1}}}{\mathrm{d}t^{\alpha_{1}}}+c_{2}\frac{\mathrm{d}^{\alpha_{2}}}{\mathrm{d}t^{\alpha_{2}}}\Big)p_{1,k}^{\alpha_{1},\alpha_{2}}(t)
	&=-(\lambda + k \mu)p_{1,k}^{\alpha_{1},\alpha_{2}}(t)+k\mu p_{2,1}^{\alpha_{1},\alpha_{2}}(t)+ \lambda p_{0}^{\alpha_{1},\alpha_{2}}(t),  \\
	\Big(c_{1}\frac{\mathrm{d}^{\alpha_{1}}}{\mathrm{d}t^{\alpha_{1}}}+c_{2}\frac{\mathrm{d}^{\alpha_{2}}}{\mathrm{d}t^{\alpha_{2}}}\Big)p_{n,s}^{\alpha_{1},\alpha_{2}}(t)
	&=-(\lambda + k \mu)p_{n,s}^{\alpha_{1},\alpha_{2}}(t)+k\mu p_{n,s+1}^{\alpha_{1},\alpha_{2}}(t)+\lambda p_{n-1,s}^{\alpha_{1},\alpha_{2}}(t), \\
	&\hspace{6.1cm}  1\leq s \leq k-1,  \, n \geq 2,  \\
	\Big(c_{1}\frac{\mathrm{d}^{\alpha_{1}}}{\mathrm{d}t^{\alpha_{1}}}+c_{2}\frac{\mathrm{d}^{\alpha_{2}}}{\mathrm{d}t^{\alpha_{2}}}\Big)p_{n,k}^{\alpha_{1},\alpha_{2}}(t)
	&=-(\lambda + k \mu)p_{n,k}^{\alpha_{1},\alpha_{2}}(t)+k\mu p_{n+1,1}^{\alpha_{1},\alpha_{2}}(t)+\lambda p_{n-1,k}^{\alpha_{1},\alpha_{2}}(t),\, n\geq2,
	\end{align*}
with $p_{0}^{\alpha_{1},\alpha_{2}}(0)=1$ and
$p_{n,s}^{\alpha_{1},\alpha_{2}}(0)=0$ for $1\leq s\leq k$, $n\geq 1$.
\end{theorem}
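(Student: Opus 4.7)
The plan is to derive the stated fractional system by lifting the classical Kolmogorov forward equations of $\{\mathcal{Q}(t)\}_{t\geq 0}$ through the independent time-change $Y_{\alpha_1,\alpha_2}$, using Laplace transforms in $t$ to convert the ordinary time derivative into the mixed Caputo operator $c_1 \mathrm{d}^{\alpha_1}/\mathrm{d}t^{\alpha_1} + c_2 \mathrm{d}^{\alpha_2}/\mathrm{d}t^{\alpha_2}$.

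First, I would recall that the classical Erlang queue is a continuous-time Markov chain on $\mathcal{H}_0$ whose transition probabilities $p_0(t)$ and $p_{n,s}(t)$ satisfy exactly the system stated in the theorem with $c_1 \mathrm{d}^{\alpha_1}/\mathrm{d}t^{\alpha_1} + c_2 \mathrm{d}^{\alpha_2}/\mathrm{d}t^{\alpha_2}$ replaced by $\mathrm{d}/\mathrm{d}t$; this is the standard Kolmogorov forward system, cf.\ Ascione \emph{et al.} (2020). By independence of $\mathcal{Q}$ and $Y_{\alpha_1,\alpha_2}$, conditioning on the time change gives
$$p_{n,s}^{\alpha_1,\alpha_2}(t)=\int_{0}^{\infty} p_{n,s}(u)\, f_{\alpha_1,\alpha_2}(t,u)\,\mathrm{d}u,$$
and the analogous identity for $p_0^{\alpha_1,\alpha_2}$.

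Now write $\psi(z) := c_1 z^{\alpha_1} + c_2 z^{\alpha_2}$, and denote by $\widehat{p}_{n,s}$ the Laplace transform of the classical probability $p_{n,s}$ and by $\widetilde{p}_{n,s}^{\alpha_1,\alpha_2}$ that of $p_{n,s}^{\alpha_1,\alpha_2}$. Then \eqref{mixedlp12} yields
$$\widetilde{p}_{n,s}^{\alpha_1,\alpha_2}(z)=\frac{\psi(z)}{z}\,\widehat{p}_{n,s}(\psi(z)).$$
I would then Laplace-transform the classical ODE system, substitute $s=\psi(z)$, and multiply through by $\psi(z)/z$. The resulting left-hand side of each equation is $\psi(z)\,\widetilde{p}_{n,s}^{\alpha_1,\alpha_2}(z)$, with an extra $-\psi(z)/z=-(c_1 z^{\alpha_1-1}+c_2 z^{\alpha_2-1})$ in the $p_0$ equation arising from $p_0(0)=1$. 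Applying \eqref{caputolp} together with the initial conditions $p_0^{\alpha_1,\alpha_2}(0)=1$ and $p_{n,s}^{\alpha_1,\alpha_2}(0)=0$ for $n\geq 1$, this left-hand side is precisely the Laplace transform of $(c_1\mathrm{d}^{\alpha_1}/\mathrm{d}t^{\alpha_1}+c_2\mathrm{d}^{\alpha_2}/\mathrm{d}t^{\alpha_2})p^{\alpha_1,\alpha_2}_{\cdot,\cdot}$. The right-hand sides already coincide with the Laplace transforms of the stated linear combinations, so inversion, justified by the uniform bound $|p_{n,s}|\leq 1$ which guarantees existence of every Laplace transform for $z>0$, yields the system.

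The main obstacle is the bookkeeping around the initial condition for $p_0^{\alpha_1,\alpha_2}$: the boundary term $\psi(z)/z$ that appears after the substitution-and-rescaling step must be matched exactly against the two boundary contributions produced by \eqref{caputolp} for the two Caputo derivatives simultaneously, by recognizing $\psi(z)/z = c_1 z^{\alpha_1-1} + c_2 z^{\alpha_2-1}$ as $c_1 z^{\alpha_1-1}p_0^{\alpha_1,\alpha_2}(0)+c_2 z^{\alpha_2-1}p_0^{\alpha_1,\alpha_2}(0)$. The remaining equations have vanishing initial values, so no boundary terms arise and the algebra proceeds uniformly across all cases.
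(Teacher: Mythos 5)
Your proposal is correct, and it rests on the same two pillars as the paper's proof --- the subordination identity $p_{n,s}^{\alpha_{1},\alpha_{2}}(t)=\int_{0}^{\infty}p_{n,s}(y)\,\mathrm{Pr}\{Y_{\alpha_{1},\alpha_{2}}(t)\in\mathrm{d}y\}$ combined with the Laplace transform \eqref{mixedlp12} of the inverse mixed stable subordinator, and the matching of the boundary term $c_{1}z^{\alpha_{1}-1}+c_{2}z^{\alpha_{2}-1}$ produced by \eqref{caputolp} --- but it organizes the verification differently. The paper funnels the entire system through the probability generating function: it asserts that the system is equivalent to the single fractional equation \eqref{Gcauchy12} for $G^{\alpha_{1},\alpha_{2}}(x,t)$, Laplace-transforms that equation, and reduces it to an identity using the classical generating-function equation of Griffiths \emph{et al.}\ (2006) and an integration by parts in $y$. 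You instead take the classical Kolmogorov forward system as the input, Laplace-transform it equation by equation, substitute $s=\psi(z)=c_{1}z^{\alpha_{1}}+c_{2}z^{\alpha_{2}}$, and rescale by $\psi(z)/z$; since each right-hand side is a fixed linear combination of state probabilities, the substitution commutes with it, and the single nontrivial boundary contribution (from $p_{0}(0)=1$) is exactly the term $\frac{\psi(z)}{z}p_{0}^{\alpha_{1},\alpha_{2}}(0)$ demanded by \eqref{caputolp}. Your route is more elementary in that it avoids both the generating function and the unproved ``if and only if'' equivalence between the system and \eqref{Gcauchy12} on which the paper relies; what it gives up is the compactness of treating all equations at once, and, like the paper, it leaves implicit the regularity of $p_{n,s}^{\alpha_{1},\alpha_{2}}$ needed for \eqref{caputolp} to apply and for the final inversion to be justified by uniqueness of Laplace transforms of continuous functions.
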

\begin{proof}				
For $(n,s) \in \mathcal{H}$, we have
\begin{align}\label{pnsalphat12}
	p_{n,s}^{\alpha_{1},\alpha_{2}}(t)
	&=\mathrm{Pr}(\mathcal{Q}(Y_{\alpha_{1},\alpha_{2}}(t))=(n,s)|\mathcal{Q}(0)=(0,0))\nonumber\\
	&=\int_{0}^{\infty}\mathrm{Pr}(\mathcal{Q}(y)=(n,s)|\mathcal{Q}(0)=(0,0))\mathrm{Pr}\{Y_{\alpha_{1},\alpha_{2}}(t)\in\mathrm{d}y\}\nonumber\\
	&=\int_{0}^{\infty}p_{n,s}(y)\mathrm{Pr}\{Y_{\alpha_{1},\alpha_{2}}(t)\in\mathrm{d}y\},
\end{align}
and
\begin{equation}\label{p0alpha12}
	{p}_{0}^{\alpha_{1},\alpha_{2}}(t)=\int_{0}^{\infty}p_{0}(y)\mathrm{Pr}\{Y_{\alpha_{1},\alpha_{2}}(t)\in\mathrm{d}y\},
\end{equation}
where $p_{n,s}(y)$ and $p_{0}(y)$ are the state probability and zero state probability of Erlang queue, respectively.

Let the generating function of $\{\mathcal{Q}^{\alpha_{1},\alpha_{2}}(t)\}_{t\geq0}$ be given by
\begin{equation}\label{Galpha12}
	G^{\alpha_{1},\alpha_{2}}(x,t)=p_{0}^{\alpha_{1},\alpha_{2}}(t)+\sum_{n=1}^{\infty} \sum_{s=1}^{k}p_{n,s}^{\alpha_{1},\alpha_{2}}(t)x^{(n-1)k+s}, \,t\geq0,\,|x|\leq1.
\end{equation}
On substituting \eqref{pnsalphat12} and \eqref{p0alpha12} in \eqref{Galpha12}, we get
\begin{equation}\label{Gxtalpha12}
	{G}^{\alpha_{1},\alpha_{2}}(x,t)=\int_{0}^{\infty}G(x,y)\mathrm{Pr}\{Y_{\alpha_{1},\alpha_{2}}(t)\in\mathrm{d}y\},
\end{equation}
where $G(x,t)$ is the generating function of $\{\mathcal{Q}(t)\}_{t\geq0}$.
On taking the Laplace transform of \eqref{p0alpha12} and \eqref{Gxtalpha12}, and using \eqref{mixedlp12}, we get
\begin{equation}\label{p0alphalp12}
	\Tilde{p_{0}}^{\alpha_{1},\alpha_{2}}(z)=(c_{1}z^{\alpha_{1}-1}+c_{2}z^{\alpha_{2}-1})\int_{0}^{\infty}p_{0}(y)e^{-y({c_{1}z^{\alpha_{1}}+c_{2}z^{\alpha_{2}}})}\mathrm{d}y
\end{equation}
and
\begin{equation}\label{Gxtalphalp12}
	\Tilde{G}^{\alpha_{1},\alpha_{2}}(x,z)=(c_{1}z^{\alpha_{1}-1}+c_{2}z^{\alpha_{2}-1})\int_{0}^{\infty}G(x,y)e^{-y(c_{1}z^{\alpha_{1}}+c_{2}z^{\alpha_{2}})}\mathrm{d}y.
\end{equation}
	
On multiplying the first equation by \(x\), the second by \(x^{s+1}\), the third by \(x^{k+1}\), the fourth by \(x^{k(n-1)+s+1}\) and the fifth by \(x^{nk+1}\) in the system of differential equations given in Theorem \ref{DE12}, and then summing these equations over the range of \(n\) and \(s\), it can be established that the state probabilities of $\{\mathcal{Q}^{\alpha_{1},\alpha_{2}}(t)\}_{t\geq0}$ are solution of the system of differential equations given in Theorem \ref{DE12} if and only if its generating function is the solution of following equation:
{\small\begin{equation}\label{Gcauchy12}
	x\Big(c_{1}\frac{\mathrm{d}^{\alpha_{1}}}{\mathrm{d}t^{\alpha_{1}}}+c_{2}\frac{\mathrm{d}^{\alpha_{2}}}{\mathrm{d}t^{\alpha_{2}}}\Big)G^{\alpha_{1},\alpha_{2}}(x,t)=(1-x)\big((k\mu-\lambda (x+\dots+x^{k}))G^{\alpha_{1},\alpha_{2}}(x,t)-k\mu p_{0}^{\alpha_{1},\alpha_{2}}(t)\big),
\end{equation}}
with initial condition $G^{\alpha_{1},\alpha_{2}}(x,0)=1$. 
On taking the Laplace transform on both sides of \eqref{Gcauchy12} and using \eqref{caputolp}, we obtain 
\begin{equation}\label{cauchyLL12}
	x(c_{1}z^{\alpha_{1}-1}+c_{2}z^{\alpha_{2}-1})(z{\tilde{G}}^{\alpha_{1},\alpha_{2}}(x,z)-1)=(1-x)\big((k\mu-\lambda(x+\dots+x^{k}))\tilde{G}^{\alpha_{1},\alpha_{2}}(x,z)-k\mu\tilde{p_{0}}^{\alpha_{1},\alpha_{2}}(z)\big).
\end{equation}
By using \eqref{p0alphalp12} and \eqref{Gxtalphalp12} in \eqref{cauchyLL12}, we have 
\begin{align}\label{lptcde}
	x\Big((c_{1}z^{\alpha_{1}}&+c_{2}z^{\alpha_{2}})
	\int_{0}^{\infty}G(x,y)e^{-y(c_{1}z^{\alpha_{1}}+c_{2}z^{\alpha_{2}})}\mathrm{d}y-1\Big)\nonumber\\
	=&\int_{0}^{\infty}(1-x)\big((k\mu-\lambda(x+\dots+x^{k}))G(x,y)-k\mu p_{0}(y)\big)e^{-y(c_{1}z^{\alpha_{1}}+c_{2}z^{\alpha_{2}})}\mathrm{d}y. 
\end{align}
Now, on substituting Eq. (6) of Griffiths {\it et al.} (2006) in \eqref{lptcde}, we get the following identity:
\begin{equation*}x\Big((c_{1}z^{\alpha_{1}}+c_{2}z^{\alpha_{2}})\int_{0}^{\infty}G(x,y)e^{-y(c_{1}z^{\alpha_{1}}+c_{2}z^{\alpha_{2}})}\mathrm{d}y-1\Big)=
	x\int_{0}^{\infty}\frac{\partial }{\partial y}G(x,y)e^{-y(c_{1}z^{\alpha_{1}}+c_{2}z^{\alpha_{2}})}\mathrm{d}y.
\end{equation*}
This proves the required result.
\end{proof}
\begin{remark}
	For $c_{1}=1$ or $c_{2}=1$ in Theorem \eqref{DE12}, it reduces to the system of differential equations that governs the state probabilities of fractional Erlang queue (see Ascione \textit{et al.} (2020), Eq. (18)).
\end{remark}
The following result will be used:
\begin{proposition}\label{Lpp012}
	The Laplace transform of zero state probability of $\{\mathcal{Q}^{\alpha_{1},\alpha_{2}}(t)\}_{t\geq0}$ is given by
\begin{equation}\label{p0lp121}
	\tilde{p}_{0}^{\alpha_{1},\alpha_{2}}(z)=\sum_{m=1}^{\infty}\sum_{r=0}^{\infty}A^{0}_{m,r}\frac{(c_{1}z^{\alpha_{1}-1}+c_{2}z^{\alpha_{2}-1})}{(\lambda+k\mu+c_{1}z^{\alpha_{1}}+c_{2}z^{\alpha_{2}})^{a_{m,r}^{0}}},
\end{equation} 
where
\begin{equation}\label{amr12}
	a_{m,r}^{0}=m+r(k+1)
\end{equation}	
and
\begin{equation}\label{Amr12}
	A^{0}_{m,r}=\frac{m\lambda^{r}(k\mu)^{m+rk-1}}{r!(m+rk)!}(m+r(k+1)-1)!.
\end{equation}
\end{proposition}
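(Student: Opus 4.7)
The starting point is the Laplace transform identity \eqref{p0alphalp12}, which rewrites
\[
\tilde{p}_{0}^{\alpha_{1},\alpha_{2}}(z)
=(c_{1}z^{\alpha_{1}-1}+c_{2}z^{\alpha_{2}-1})\,\tilde{p}_{0}\bigl(c_{1}z^{\alpha_{1}}+c_{2}z^{\alpha_{2}}\bigr),
\]
where $\tilde{p}_{0}(s)=\int_{0}^{\infty}p_{0}(y)e^{-sy}\mathrm{d}y$ is the Laplace transform of the zero state probability of the classical Erlang queue. Comparing with the stated formula \eqref{p0lp121}, the task reduces to establishing
\[
\tilde{p}_{0}(s)=\sum_{m=1}^{\infty}\sum_{r=0}^{\infty}\frac{A^{0}_{m,r}}{(\lambda+k\mu+s)^{a_{m,r}^{0}}}
\]
with the constants $a_{m,r}^{0}$ and $A_{m,r}^{0}$ as in \eqref{amr12}--\eqref{Amr12}. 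Once this is proved, substituting $s=c_{1}z^{\alpha_{1}}+c_{2}z^{\alpha_{2}}$ and multiplying by $(c_{1}z^{\alpha_{1}-1}+c_{2}z^{\alpha_{2}-1})$ yields the proposition.

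The plan to obtain the series for $\tilde p_{0}(s)$ is to invoke the explicit representation of $p_{0}(t)$ given in Eq.~(6) of Griffiths \emph{et al.} (2006), already cited in the proof of Theorem \ref{DE12}. That representation expresses $p_{0}(t)$ as a double series of the form
\[
p_{0}(t)=\sum_{m=1}^{\infty}\sum_{r=0}^{\infty}A^{0}_{m,r}\,\frac{t^{a_{m,r}^{0}-1}}{(a_{m,r}^{0}-1)!}\,e^{-(\lambda+k\mu)t},
\]
where the combinatorial factor $(a_{m,r}^{0}-1)! = (m+r(k+1)-1)!$ is precisely the one that appears in the definition of $A^{0}_{m,r}$. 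The term-by-term Laplace transform is immediate from the elementary identity
\[
\int_{0}^{\infty}\frac{t^{n-1}}{(n-1)!}e^{-(\lambda+k\mu)t}e^{-st}\,\mathrm{d}t=\frac{1}{(\lambda+k\mu+s)^{n}},
\]
applied with $n=a_{m,r}^{0}$, and delivers the desired expression for $\tilde p_{0}(s)$.

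The main technical obstacle is justifying the interchange of summation and integration required for the term-by-term Laplace transform. Since all summands are non-negative (each $A^{0}_{m,r}>0$, and each integrand is positive), Tonelli's theorem applies provided the resulting double series on the right-hand side converges, which for $\mathrm{Re}(s)>0$ follows from the fact that $\tilde p_{0}(s)<\infty$ (as $p_{0}$ is a probability, hence bounded by $1$). Alternatively, one can argue directly from absolute convergence of the series for $p_{0}(t)$ used in Griffiths \emph{et al.} Once that interchange is secured, the proof is a direct verification: collect the sum, identify the denominator with $a_{m,r}^{0}=m+r(k+1)$, and substitute back via \eqref{p0alphalp12} to recover \eqref{p0lp121}.
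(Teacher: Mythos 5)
Your proof is correct and follows essentially the same route as the paper: substitute the known double-series representation of $p_0(y)$ into \eqref{p0alphalp12}, evaluate the resulting gamma integrals term by term, and identify the constants $a_{m,r}^{0}$ and $A_{m,r}^{0}$. The only quibble is the citation --- the series you write down for $p_0(t)$ is Eq.~(5) of Ascione \emph{et al.} (2020) (Eq.~(6) of Griffiths \emph{et al.} is the generating-function equation used in the proof of Theorem \ref{DE12}) --- but the formula itself is stated correctly, and your Tonelli justification of the term-by-term integration is a welcome addition the paper omits.
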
	
\begin{proof}
By using Eq. (5) of Ascione {\it et al.} (2020) in \eqref{p0alphalp12}, we get 
{\small\begin{align}\label{p00lp121}
	\Tilde{{p}_{0}}^{\alpha_{1},\alpha_{2}}(z)
	&=\sum_{m=1}^{\infty}\sum_{r=0}^{\infty}\frac{m\lambda^{r}(k\mu)^{m+rk-1}}{r!(m+rk)!}(c_{1}z^{\alpha_{1}-1}+c_{2}z^{\alpha_{2}-1})\int_{0}^{\infty}y^{m+r(k+1)-1}e^{-(\lambda+k\mu+c_{1}z^{\alpha_{1}}+c_{2}z^{\alpha_{2}})y}\mathrm{d}y\nonumber\\		&=\sum_{m=1}^{\infty}\sum_{r=0}^{\infty}\frac{m\lambda^{r}(k\mu)^{m+rk-1}}{r!(m+rk)!}\frac{(c_{1}z^{\alpha_{1}-1}+c_{2}z^{\alpha_{2}-1})(m+r(k+1)-1)!}{(\lambda+k\mu+c_{1}z^{\alpha_{1}}+c_{2}z^{\alpha_{2}})^{m+r(k+1)}}.
\end{align}}	
On substituting \eqref{amr12} and \eqref{Amr12} in \eqref{p00lp121}, we get the required result.
\end{proof}
\begin{remark}
	For $c_{1}=1$ or $c_{2}=1$ in \eqref{p0lp121}, it reduces to the Laplace transform of fractional Erlang queue (see Ascione {\it et al.} (2020), Eq. (37)).
\end{remark}
\begin{theorem}\label{thmp012}
	Let $f^{*(n)}$ denote the $n$-fold convolution of any function $f(\cdot)$ and $E_{\alpha,\beta}^{\gamma}(\cdot)$ be the three parameter Mittag-Leffler function defined in \eqref{Mittag12}. Then,
	the zero state probability of $\{\mathcal{Q}^{\alpha_{1},\alpha_{2}}(t)\}_{t\geq0}$ is given by
\begin{equation}\label{finp0alpha12}
	p_{0}^{\alpha_{1},\alpha_{2}}(t)=\sum_{m=1}^{\infty}\sum_{r=0}^{\infty}\frac{A^{0}_{m,r}}{c_{1}^{a_{m,r}^{0}}}\Big(c_{1}f_{a_{m,r}^{0}}^{*(a_{m,r}^{0})}(t)+c_{2}g_{a_{m,r}^{0}}^{*(a_{m,r}^{0})}(t)\Big),
\end{equation}
where $a_{m,r}^{0}$ and $A_{m,r}^{0}$ are given in \eqref{amr12} and \eqref{Amr12}, respectively. Here,
\begin{equation}\label{f12}
	f_{N}(t)=t^{\alpha_{1}-\big(\frac{\alpha_{1}-1}{N}\big)-1}\sum_{h=0}^{\infty}\Big(-\frac{c_{2}}{c_{1}}\Big)^{h}t^{(\alpha_{1}-\alpha_{2})h}E^{h+1}_{\alpha_{1},\alpha_{1}+(\alpha_{1}-\alpha_{2})h+\frac{1-\alpha_{1}}{N}}\Big(-\Big(\frac{\lambda+k\mu}{c_{1}}\Big)t^{\alpha_{1}}\Big)
\end{equation} and
\begin{equation}\label{g12}
	g_{N}(t)=t^{\alpha_{1}-\big(\frac{\alpha_{2}-1}{N}\big)-1}\sum_{h=0}^{\infty}\Big(-\frac{c_{2}}{c_{1}}\Big)^{h}t^{(\alpha_{1}-\alpha_{2})h}E^{h+1}_{\alpha_{1},\alpha_{1}+(\alpha_{1}-\alpha_{2})h+\frac{1-\alpha_{2}}{N}}\Big(-\Big(\frac{\lambda+k\mu}{c_{1}}\Big)t^{\alpha_{1}}\Big).
\end{equation}
\end{theorem}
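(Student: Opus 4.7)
\emph{Proof plan.} The strategy is to invert the double-series Laplace transform
$\tilde p_{0}^{\alpha_{1},\alpha_{2}}(z)$ of Proposition~\ref{Lpp012} summand by summand.
For the indices $N=a_{m,r}^{0}$, each term carries the quadrinomial
$\lambda+k\mu+c_{1}z^{\alpha_{1}}+c_{2}z^{\alpha_{2}}$ raised to the integer power $N$
in its denominator, whereas formula \eqref{rholp} only handles a first-power denominator.
The device I will use to bridge this gap is that an $N$-th power of a Laplace transform is
the transform of an $N$-fold convolution; this is the origin of the $f_{N}^{*(N)}$ and
$g_{N}^{*(N)}$ objects appearing in \eqref{finp0alpha12}.

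Concretely, I will split the numerator $c_{1}z^{\alpha_{1}-1}+c_{2}z^{\alpha_{2}-1}$ into
its two pieces and, for a generic exponent $\rho\in\{\alpha_{1},\alpha_{2}\}$, use the
algebraic rewriting
\begin{equation*}
\frac{z^{\rho-1}}{(\lambda+k\mu+c_{1}z^{\alpha_{1}}+c_{2}z^{\alpha_{2}})^{N}}
=\frac{1}{c_{1}^{N}}\left(\frac{z^{(\rho-1)/N}}{z^{\alpha_{1}}+(c_{2}/c_{1})z^{\alpha_{2}}+(\lambda+k\mu)/c_{1}}\right)^{\!N},
\end{equation*}
which explains the factor $c_{1}^{-a_{m,r}^{0}}$ in the announced formula. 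To invert the
parenthesised base function I will apply \eqref{rholp} with $\alpha=\alpha_{1}$,
$\beta=\alpha_{2}$, $a=c_{2}/c_{1}$, $b=(\lambda+k\mu)/c_{1}$, and
$\rho'=1+(\rho-1)/N$; this choice produces exactly the exponents $\alpha_{1}-(\rho-1)/N-1$
and $\alpha_{1}+(\alpha_{1}-\alpha_{2})h+(1-\rho)/N$ on the time side, so that $\rho=\alpha_{1}$
yields $f_{N}$ of \eqref{f12} and $\rho=\alpha_{2}$ yields $g_{N}$ of \eqref{g12}. Taking
the $N$-fold convolution to account for the $N$-th power, multiplying back by the
respective prefactors $c_{1}$ and $c_{2}$, and finally reassembling the outer double sum
$\sum_{m,r}A_{m,r}^{0}(\cdot)$ from Proposition~\ref{Lpp012} will produce \eqref{finp0alpha12}.

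The main obstacle I anticipate is not any individual algebraic step but the analytic
bookkeeping: the inversion is carried out on each summand, so one must justify both the
validity of \eqref{rholp} on each piece (i.e., the hypothesis $|az^{\beta}/(z^{\alpha}+b)|<1$
holds on a common right half-plane with $\alpha_{1}-\rho'+1>0$, which here reduces to
$\alpha_{1}>(\rho-1)/N$ and is automatic since $\rho<1$), and the interchange of the
inverse Laplace transform with the double sum over $(m,r)$ and the inner $h$-series in
\eqref{f12}--\eqref{g12}. I plan to dispatch this by absolute-convergence estimates,
exploiting the factorial decay of $A_{m,r}^{0}/c_{1}^{a_{m,r}^{0}}$ against the controlled
growth of the three-parameter Mittag--Leffler terms, in the same spirit as the
corresponding argument for the pure fractional Erlang queue in Ascione \textit{et al.} (2020).
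Once this has been secured, a consistency check at $c_{1}=1$ (or $c_{2}=1$) should
collapse \eqref{finp0alpha12} to the known fractional-queue formula and validate the
derivation.
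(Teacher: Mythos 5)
Your proposal is correct and follows essentially the same route as the paper: both invert the series from Proposition~\ref{Lpp012} term by term, factor $c_{1}^{N}$ out of the denominator so that the $N$-th power becomes the transform of an $N$-fold convolution, and apply \eqref{rholp} with the shifted exponent $\rho'=1+(\rho-1)/N$ to recover $f_{N}$ and $g_{N}$. Your added attention to the hypotheses of \eqref{rholp} and to interchanging the inverse transform with the sums is a point the paper passes over silently, but it does not change the argument.
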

\begin{proof}
On taking the inverse Laplace transform of \eqref{p0lp121}, we obtain
\begin{align}\label{p01212}
	p_{0}^{\alpha_{1},\alpha_{2}}(t)&=\sum_{m=1}^{\infty}\sum_{r=0}^{\infty}A^{0}_{m,r}\Bigg(\frac{1}{c_{1}^{a_{m,r}^{0}-1}}\mathbb{L}^{-1}\Bigg(\bigg(\frac{z^{\frac{\alpha_{1}-1}{a_{m,r}^{0}}}}{z^{\alpha_{1}}+\frac{c_{2}}{c_{1}}z^{\alpha_{2}}+\frac{\lambda+k\mu}{c_{1}}}\bigg)^{a_{m,r}^{0}};t\Bigg)\nonumber \\
	&\hspace{5cm} +\frac{c_{2}}{c_{1}^{a_{m,r}^{0}}}\mathbb{L}^{-1}\Bigg(\bigg(\frac{z^{\frac{\alpha_{2}-1}{a_{m,r}^{0}}}}{z^{\alpha_{1}}+\frac{c_{2}}{c_{1}}z^{\alpha_{2}}+\frac{\lambda+k\mu}{c_{1}}}\bigg)^{a_{m,r}^{0}};t\Bigg)\Bigg).
\end{align}
On using \eqref{rholp} in \eqref{p01212}, we get the required result.
\end{proof}
\begin{remark}\label{p0tremark12}
	In \eqref{finp0alpha12}, if $c_{1}=1$ then $c_{2}=0$. So,
\begin{equation}\label{p0tc112}
	p_{0}^{\alpha_{1},\alpha_{2}}(t)=\sum_{m=1}^{\infty}\sum_{r=0}^{\infty}A^{0}_{m,r}f_{a_{m,r}^{0}}^{*(a_{m,r}^{0})}(t)
\end{equation}
and 
\begin{equation}\label{p0r12}
	f_{a_{m,r}^{0}}(t)=t^{\alpha_{1}-\big(\frac{\alpha_{1}-1}{a_{m,r}^{0}}\big)-1}E^{1}_{\alpha_{1},\alpha_{1}+\frac{1-\alpha_{1}}{a_{m,r}^{0}}}(-(\lambda+k\mu)t^{\alpha_{1}}).
\end{equation}
On taking the Laplace transform of \eqref{p0r12} and using \eqref{ltm12}, we have
\begin{equation}\label{fr12}
	\tilde{f}_{a_{m,r}^{0}}(z)=\frac{z^{\frac{\alpha_{1}-1}{a_{m,r}^{0}}}}{z^{\alpha_{1}}+\lambda+k\mu}.
\end{equation}
Thus, the Laplace transform of $a_{0}^{m,r}$-fold  convolution of \eqref{fr12} is given by
\begin{equation}\label{frr12}
	\tilde{f}_{a_{m,r}^{0}}^{*(a_{m,r}^{0})}(z)=\frac{z^{\alpha_{1}-1}}{(z^{\alpha_{1}}+\lambda+k\mu)^{a_{m,r}^{0}}}.
\end{equation}
In \eqref{p0tc112}, on taking the inverse Laplace transform of \eqref{frr12} and using \eqref{ltm12}, we get the zero state probability of fractional Erlang queue (see Ascione {\it et al.} (2020), Eq. (36)). 
Similar result holds if $c_{2}=1$.
\end{remark}
In the subsequent results, we will be using the following notations:
\begin{align}\label{constnts12}
	\left.
	\begin{aligned}
		B_{i}^{n,s}&=\frac{\lambda^{n+i}(k\mu)^{k(i+1)-s}}{(k(i+1)-s)!(n+i)!}(n+k-s+i(k+1))!,\\
		\delta_{i}^{n,s}&=n-s+(i+1)(k+1),\\
		C_{i,m,r}^{n,s}&=k\mu B_{i}^{n,s}A_{m,r}^{0},\\
		\nu_{i,m,r}^{n,s}&=\delta_{i}^{n,s}+a_{m,r}^{0},\\
		D_{i,m,r}^{n,s}&=\begin{cases}
			k\mu B_{i}^{n,s+1}A_{m,r}^{0},\,s=1,2,\dots,k-1,\\
			k\mu B_{i}^{n+1,1}A_{m,r}^{0},\, s=k,
		\end{cases}	\\
		\pi_{i,m,r}^{n,s}&=\begin{cases}
			\delta_{i}^{n,s+1}+a_{m,r}^{0},\,s=1,2,\dots,k-1,\\
			\delta_{i}^{n+1,1}+a_{m,r}^{0},\, s=k,
		\end{cases}\\
	\end{aligned}
	\right\}
\end{align}
where $a_{m,r}^{0}$ and $A_{m,r}^{0}$ are as given in \eqref{amr12} and \eqref{Amr12}, respectively.
\begin{proposition}\label{proppns12}
	The Laplace transform of state probabilities of $\{\mathcal{Q}^{\alpha_{1},\alpha_{2}}(t)\}_{t\geq0}$ are given by
\begin{align}\label{pnslp12}
	\Tilde{p_{n,s}}^{\alpha_{1},\alpha_{2}}(z)&=(c_{1}z^{\alpha_{1}-1}+c_{2}z^{\alpha_{2}-1})\Big(\sum_{i=0}^{\infty}\frac{B_{i}^{n,s}}{(\lambda+k\mu+c_{1}z^{\alpha_{1}}+c_{2}z^{\alpha_{2}})^{\delta^{n,s}_{i}}}\nonumber\\
	&\hspace{1.2cm} +\sum_{i=0}^{\infty}\sum_{m=1}^{\infty}\sum_{r=0}^{\infty}\frac{C^{n,s}_{i,m,r}}{(\lambda+k\mu+c_{1}z^{\alpha_{1}}+c_{2}z^{\alpha_{2}})^{\nu_{i,m,r}^{n,s}}}\nonumber\\
	&\hspace{1.2cm} -\sum_{i=0}^{\infty}\sum_{m=1}^{\infty}\sum_{r=0}^{\infty}\frac{D^{n,s}_{i,m,r}}{(\lambda+k\mu+c_{1}z^{\alpha_{1}}+c_{2}z^{\alpha_{2}})^{\pi_{i,m,r}^{n,s}}}\Big),\,n\geq1,\,1\leq s\leq k.
\end{align}
\end{proposition}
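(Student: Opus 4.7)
The strategy is to imitate the proof of Proposition \ref{Lpp012}. Starting from \eqref{pnsalphat12}, I take the Laplace transform in $t$ and use \eqref{mixedlp12} to obtain
\[
\tilde{p}_{n,s}^{\alpha_1,\alpha_2}(z) = (c_1 z^{\alpha_1-1} + c_2 z^{\alpha_2-1}) \int_0^\infty p_{n,s}(y)\, e^{-y(c_1 z^{\alpha_1} + c_2 z^{\alpha_2})}\, dy,
\]
so the problem reduces to computing the ordinary Laplace transform of the classical Erlang queue state probability $p_{n,s}(y)$ evaluated at the argument $\beta \coloneqq c_1 z^{\alpha_1} + c_2 z^{\alpha_2}$.

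Next, I would invoke the explicit closed-form series expression for $p_{n,s}(y)$ from the transient analysis in Ascione et al.\ (2020) and Griffiths et al.\ (2006) — the analogue for general $(n,s)$ of the formula for $p_0(y)$ used in the proof of Proposition \ref{Lpp012}. Guided by the constants in \eqref{constnts12}, this representation splits into three pieces: a ``free'' part $\sum_i B_i^{n,s}\, y^{\delta_i^{n,s}-1} e^{-(\lambda+k\mu)y}/(\delta_i^{n,s}-1)!$, plus two additive corrections in which a shifted version of the free kernel is convolved with $p_0(\cdot)$ and weighted by $\pm k\mu$. The positive correction uses the coefficient $B_i^{n,s}$ itself and will supply the $C$-terms, while the negative correction uses $B_i^{n,s+1}$ for $1 \leq s \leq k-1$ and $B_i^{n+1,1}$ for $s = k$, and supplies the $D$-terms exactly as defined in \eqref{constnts12}.

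Finally, I evaluate the three resulting integrals in $y$. The free part contributes
\[
\int_0^\infty \frac{y^{N-1}}{(N-1)!} e^{-(\lambda+k\mu+\beta)y}\, dy = \frac{1}{(\lambda+k\mu+\beta)^N}
\]
with $N = \delta_i^{n,s}$, producing the first series in \eqref{pnslp12}. For the two convolution terms, the Laplace convolution theorem lets me replace $\int_0^\infty p_0(y) e^{-\beta y}\, dy$ by the series $\sum_{m,r} A_{m,r}^0/(\lambda+k\mu+\beta)^{a_{m,r}^0}$ extracted from the proof of Proposition \ref{Lpp012}; the product of denominators then raises each exponent by $a_{m,r}^0$, which is precisely how $\nu_{i,m,r}^{n,s}$ and $\pi_{i,m,r}^{n,s}$ appear in the second and third series of \eqref{pnslp12}. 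Multiplying out by the prefactor $(c_1 z^{\alpha_1-1}+c_2 z^{\alpha_2-1})$ completes the identification.

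The main obstacle is accurately locating and transcribing the explicit form of $p_{n,s}(y)$ from the reference and keeping track of the case split $s \leq k-1$ versus $s = k$ in the $D$ and $\pi$ constants. This split is forced by the Erlang phase cycling seen in Theorem \ref{DE12}: the down-neighbor of state $(n,s)$ is $(n,s+1)$ when $s < k$ but wraps to $(n+1,1)$ when $s = k$, and this boundary behavior must be propagated carefully through the convolution kernel to recover exactly the coefficients recorded in \eqref{constnts12}.
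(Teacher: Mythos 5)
Your proposal is correct and follows essentially the same route as the paper: reduce to the ordinary Laplace transform of $p_{n,s}(y)$ at the argument $c_1z^{\alpha_1}+c_2z^{\alpha_2}$ via \eqref{mixedlp12}, substitute the three-part series representations of $p_{n,s}$ (Eqs.~(6)--(7) of Ascione \emph{et al.} (2020), with the $s\le k-1$ versus $s=k$ split), and evaluate the resulting gamma integrals and $p_0$-convolutions to produce the $B$, $C$ and $D$ series. The only cosmetic difference is that you invoke the Laplace convolution theorem directly where the paper cites Lemma 5.4 of Ascione \emph{et al.} (2020); the computation is the same.
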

\begin{proof}
	On taking the Laplace transform of \eqref{pnsalphat12} and using \eqref{mixedlp12}, we get
\begin{equation}\label{pnsalphalp12}
	\Tilde{p_{n,s}}^{\alpha_{1},\alpha_{2}}(z)=(c_{1}z^{\alpha_{1}-1}+c_{2}z^{\alpha_{2}-1})\int_{0}^{\infty}p_{n,s}(y)e^{-y({c_{1}z^{\alpha_{1}}+c_{2}z^{\alpha_{2}}})}\mathrm{d}y.
\end{equation}
For $n\geq1$ and $1\leq s\leq k-1$, by using Eq. (6) of Ascione {\it et al.} (2020) in \eqref{pnsalphalp12}, we obtain 
\begin{align}\label{pnslp121}
	\Tilde{{p}_{n,s}}^{\alpha_{1},\alpha_{2}}(z)
	&=(c_{1}z^{\alpha_{1}-1}+c_{2}z^{\alpha_{2}-1})\Big(\sum_{i=0}^{\infty}\frac{\lambda^{n+i}(k\mu)^{k(i+1)-s}}{(k(i+1)-s)!(n+i)!}\int_{0}^{\infty}y^{n+k-s+i(k+1)}\nonumber \\ 
	&\ \ \cdot e^{-(\lambda+k\mu+c_{1}z^{\alpha_{1}}+c_{2}z^{\alpha_{2}})y}\mathrm{d}y +\sum_{i=0}^{\infty}\frac{\lambda^{n+i}(k\mu)^{k(i+1)-s+1}}{(k(i+1)-s)!(n+i)!}\nonumber\\
	&\ \ \cdot\int_{0}^{\infty}\int_{0}^{y}p_{0}(u)(y-u)^{n+k-s+i(k+1)}e^{-(\lambda+k\mu)(y-u)}e^{-(c_{1}z^{\alpha_{1}}+c_{2}z^{\alpha_{2}})y}\mathrm{d}u\mathrm{d}y\nonumber\\
	&\ \ -\sum_{i=0}^{\infty}\frac{\lambda^{n+i}(k\mu)^{k(i+1)-s}}{(k(i+1)-s-1)!(n+i)!} \int_{0}^{\infty}\int_{0}^{y}p_{0}(u)(y-u)^{n+k-s-1+i(k+1)}\nonumber\\
	&\ \ \cdot e^{-(\lambda+k\mu)(y-u)} e^{-(c_{1}z^{\alpha_{1}}+c_{2}z^{\alpha_{2}})y}\mathrm{d}u\mathrm{d}y\Big).
\end{align}	
Also, by using Eq. (7) of Ascione {\it et al.} (2020) in \eqref{pnsalphalp12}, we have
\begin{align}\label{pnklp121}
	\Tilde{{p}_{n,k}}^{\alpha_{1},\alpha_{2}}(z)
	&=(c_{1}z^{\alpha_{1}-1}+c_{2}z^{\alpha_{2}-1})\Big(\sum_{i=0}^{\infty}\frac{\lambda^{n+i}(k\mu)^{ki}}{(ki)!(n+i)!}\int_{0}^{\infty}y^{n+i(k+1)} e^{-(\lambda+k\mu+c_{1}z^{\alpha_{1}}+c_{2}z^{\alpha_{2}})y}\mathrm{d}y\nonumber \\ 
	&\ \  +\sum_{i=0}^{\infty}\frac{\lambda^{n+i}(k\mu)^{ki+1}}{(ki)!(n+i)!}\int_{0}^{\infty}\int_{0}^{y}p_{0}(u)(y-u)^{n+i(k+1)}e^{-(\lambda+k\mu)(y-u)}\nonumber\\
	&\ \ \cdot e^{-(c_{1}z^{\alpha_{1}}+c_{2}z^{\alpha_{2}})y}\mathrm{d}u\mathrm{d}y -\sum_{i=0}^{\infty}\frac{\lambda^{n+i+1}(k\mu)^{k(i+1)}}{(k(i+1)-1)!(n+i+1)!}\nonumber\\
	&\ \ \cdot \int_{0}^{\infty}\int_{0}^{y}p_{0}(u)(y-u)^{n+k+i(k+1)}e^{-(\lambda+k\mu)(y-u)} e^{-(c_{1}z^{\alpha_{1}}+c_{2}z^{\alpha_{2}})y}\mathrm{d}u\mathrm{d}y\Big).
\end{align}
By using Lemma 5.4 of Ascione {\it et al.} (2020) and \eqref{constnts12} in \eqref{pnslp121} and \eqref{pnklp121}, we get the required result.	
\end{proof}
\begin{remark}
	For $c_{1}=1$ or $c_{2}=1$ in \eqref{pnslp12}, the Laplace transform of state probabilities of mixed time-changed Erlang queue reduces to that of fractional Erlang queue (see Ascione {\it et al.} (2020), Theorem 5.5).
\end{remark}
\begin{theorem}\label{thmpnst12}
	For $1\leq s\leq k$, the state probabilities of $\{\mathcal{Q}^{\alpha_{1},\alpha_{2}}(t)\}_{t\geq0}$ are given by
\begin{align*}
	p_{n,s}^{\alpha_{1},\alpha_{2}}(t)&=\sum_{i=0}^{\infty}\frac{B^{n,s}_{i}}{c_{1}^{\delta_{i}^{n,s}}}\Big(c_{1}f_{\delta_{i}^{n,s}}^{*(\delta_{i}^{n,s})}(t)+c_{2}g_{\delta_{i}^{n,s}}^{*(b^{n,s}_{i})}(t)\Big)\nonumber\\
	&\ \ +\sum_{i=0}^{\infty}\sum_{m=1}^{\infty}\sum_{r=0}^{\infty}\frac{C^{n,s}_{i,m,r}}{c_{1}^{\nu_{i,m,r}^{n,s}}}\Big(c_{1}f_{\nu_{i,m,r}^{n,s}}^{*(\nu_{i,m,r}^{n,s})}(t)+c_{2}g_{\nu_{i,m,r}^{n,s}}^{*(\nu_{i,m,r}^{n,s})}(t)\Big)\nonumber\\
	&\ \ -\sum_{i=0}^{\infty}\sum_{m=1}^{\infty}\sum_{r=0}^{\infty}\frac{D^{n,s}_{i,m,r}}{c_{1}^{\pi_{i,m,r}^{n,s}}}\Big(c_{1}f_{\pi_{i,m,r}^{n,s}}^{*(\pi_{i,m,r}^{n,s})}(t)+c_{2}g_{\pi_{i,m,r}^{n,s}}^{*(\pi_{i,m,r}^{n,s})}(t)\Big),\,n\geq1,
\end{align*}
where $f_{N}(\cdot)$ and $g_{N}(\cdot)$ are given in \eqref{f12} and \eqref{g12}, respectively. 
\end{theorem}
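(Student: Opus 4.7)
The plan is to mimic the proof of Theorem \ref{thmp012} term by term, starting from the Laplace transform expression \eqref{pnslp12} in Proposition \ref{proppns12}. Each summand of \eqref{pnslp12} has the common shape
\[
\frac{c_{1}z^{\alpha_{1}-1}+c_{2}z^{\alpha_{2}-1}}{(\lambda+k\mu+c_{1}z^{\alpha_{1}}+c_{2}z^{\alpha_{2}})^{N}},
\]
where the exponent $N$ is successively $\delta_{i}^{n,s}$, $\nu_{i,m,r}^{n,s}$, or $\pi_{i,m,r}^{n,s}$ and the accompanying coefficients are $B_{i}^{n,s}$, $C_{i,m,r}^{n,s}$, $-D_{i,m,r}^{n,s}$ from \eqref{constnts12}. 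So the proof reduces to computing $\mathbb{L}^{-1}$ of this common expression.

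The first step is to pull out a factor of $c_{1}^{N}$ from the denominator, writing the common expression as
\[
\frac{1}{c_{1}^{N}}\!\left(c_{1}\frac{z^{\alpha_{1}-1}}{\bigl(z^{\alpha_{1}}+\tfrac{c_{2}}{c_{1}}z^{\alpha_{2}}+\tfrac{\lambda+k\mu}{c_{1}}\bigr)^{N}}+c_{2}\frac{z^{\alpha_{2}-1}}{\bigl(z^{\alpha_{1}}+\tfrac{c_{2}}{c_{1}}z^{\alpha_{2}}+\tfrac{\lambda+k\mu}{c_{1}}\bigr)^{N}}\right).
\]
Next, I would use the identity
\[
\frac{z^{\alpha_{1}-1}}{\bigl(z^{\alpha_{1}}+\tfrac{c_{2}}{c_{1}}z^{\alpha_{2}}+\tfrac{\lambda+k\mu}{c_{1}}\bigr)^{N}}=\left(\frac{z^{(\alpha_{1}-1)/N}}{z^{\alpha_{1}}+\tfrac{c_{2}}{c_{1}}z^{\alpha_{2}}+\tfrac{\lambda+k\mu}{c_{1}}}\right)^{N},
\]
and its analogue with $\alpha_{2}$ in the numerator, so that the inverse Laplace transform becomes an $N$-fold time-convolution of a single function of $z$. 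Applying \eqref{rholp} with $\alpha=\alpha_{1}$, $\beta=\alpha_{2}$, $a=c_{2}/c_{1}$, $b=(\lambda+k\mu)/c_{1}$ and $\rho=1+(\alpha_{1}-1)/N$ produces exactly $f_{N}(t)$ as defined in \eqref{f12}; the same argument with $\rho=1+(\alpha_{2}-1)/N$ produces $g_{N}(t)$ as in \eqref{g12}. The convolution theorem then gives $f_{N}^{*(N)}$ and $g_{N}^{*(N)}$ respectively.

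Putting these ingredients together, each typical term of \eqref{pnslp12} inverts to
\[
\frac{\text{coefficient}}{c_{1}^{N}}\bigl(c_{1}f_{N}^{*(N)}(t)+c_{2}g_{N}^{*(N)}(t)\bigr),
\]
and summing over the three families with the coefficients $B_{i}^{n,s}$, $C_{i,m,r}^{n,s}$, $-D_{i,m,r}^{n,s}$ and exponents $\delta_{i}^{n,s}$, $\nu_{i,m,r}^{n,s}$, $\pi_{i,m,r}^{n,s}$ from \eqref{constnts12} reproduces the formula in the statement. The main obstacle is not any single analytic step---each piece is already packaged in Proposition \ref{proppns12} and \eqref{rholp}---but the bookkeeping: one must verify that the parameter identifications $(\alpha-\rho,\ \alpha+(\alpha-\beta)h-\rho+1)$ in \eqref{rholp} line up with the shift exponents $\alpha_{1}-(\alpha_{j}-1)/N-1$ and $\alpha_{1}+(\alpha_{1}-\alpha_{2})h+(1-\alpha_{j})/N$ appearing in \eqref{f12}--\eqref{g12} for $j=1,2$, and that termwise inversion is justified (which follows from the absolute convergence conditions $|az^{\beta}/(z^{\alpha}+b)|<1$ in \eqref{rholp} combined with the decay of the coefficients $B_{i}^{n,s}$, $C_{i,m,r}^{n,s}$, $D_{i,m,r}^{n,s}$, exactly as in the $p_{0}$-case handled in Theorem \ref{thmp012} and Remark \ref{p0tremark12}).
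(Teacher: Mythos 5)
Your proposal is correct and follows essentially the same route as the paper: invert \eqref{pnslp12} term by term by factoring out $c_{1}^{N}$, recognizing each summand as the $N$-th power of a single Laplace transform, and applying \eqref{rholp} with $\rho=1+(\alpha_{j}-1)/N$ to obtain $f_{N}^{*(N)}$ and $g_{N}^{*(N)}$, exactly as in the paper's passage from \eqref{pnslp12} to \eqref{pns1212}. Your parameter identifications in \eqref{rholp} match \eqref{f12}--\eqref{g12}, so no further comment is needed.
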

\begin{proof}
On taking the inverse Laplace transform of \eqref{pnslp12}, we obtain
\begin{align}\label{pns1212}
	p_{n,s}^{\alpha_{1},\alpha_{2}}(t)&=\sum_{i=0}^{\infty}B^{n,s}_{i}\Bigg(\frac{1}{c_{1}^{\delta_{i}^{n,s}-1}}\mathbb{L}^{-1}\Bigg(\bigg(\frac{z^{\frac{\alpha_{1}-1}{\delta_{i}^{n,s}}}}{z^{\alpha_{1}}+\frac{c_{2}}{c_{1}}z^{\alpha_{2}}+\frac{\lambda+k\mu}{c_{1}}}\bigg)^{\delta_{i}^{n,s}};t\Bigg)\nonumber \\
	&\ \ +\frac{c_{2}}{c_{1}^{\delta_{i}^{n,s}}}\mathbb{L}^{-1}\Bigg(\bigg(\frac{z^{\frac{\alpha_{2}-1}{\delta_{i}^{n,s}}}}{z^{\alpha_{1}}+\frac{c_{2}}{c_{1}}z^{\alpha_{2}}+\frac{\lambda+k\mu}{c_{1}}}\bigg)^{\delta_{i}^{n,s}};t\Bigg)\Bigg)\nonumber\\
	&\ \ +\sum_{i=0}^{\infty}\sum_{m=1}^{\infty}\sum_{r=0}^{\infty}C^{n,s}_{i,m,r}\Bigg(\frac{1}{c_{1}^{\nu_{i,m,r}^{n,s}-1}}\mathbb{L}^{-1}\Bigg(\bigg(\frac{z^{\frac{\alpha_{1}-1}{\nu_{i,m,r}^{n,s}}}}{z^{\alpha_{1}}+\frac{c_{2}}{c_{1}}z^{\alpha_{2}}+\frac{\lambda+k\mu}{c_{1}}}\bigg)^{\nu_{i,m,r}^{n,s}};t\Bigg)\nonumber \\
	&\ \ +\frac{c_{2}}{c_{1}^{\nu_{i,m,r}^{n,s}}}\mathbb{L}^{-1}\Bigg(\bigg(\frac{z^{\frac{\alpha_{2}-1}{\nu_{i,m,r}^{n,s}}}}{z^{\alpha_{1}}+\frac{c_{2}}{c_{1}}z^{\alpha_{2}}+\frac{\lambda+k\mu}{c_{1}}}\bigg)^{\nu_{i,m,r}^{n,s}};t\Bigg)\Bigg)\nonumber\\
	&\ \ -\sum_{i=0}^{\infty}\sum_{m=1}^{\infty}\sum_{r=0}^{\infty}D^{n,s}_{i,m,r}\Bigg(\frac{1}{c_{1}^{\pi_{i,m,r}^{n,s}-1}}\mathbb{L}^{-1}\Bigg(\bigg(\frac{z^{\frac{\alpha_{1}-1}{\pi_{i,m,r}^{n,s}}}}{z^{\alpha_{1}}+\frac{c_{2}}{c_{1}}z^{\alpha_{2}}+\frac{\lambda+k\mu}{c_{1}}}\bigg)^{\pi_{i,m,r}^{n,s}};t\Bigg)\nonumber \\
	&\ \ +\frac{c_{2}}{c_{1}^{\pi_{i,m,r}^{n,s}}}\mathbb{L}^{-1}\Bigg(\bigg(\frac{z^{\frac{\alpha_{2}-1}{\pi_{i,m,r}^{n,s}}}}{z^{\alpha_{1}}+\frac{c_{2}}{c_{1}}z^{\alpha_{2}}+\frac{\lambda+k\mu}{c_{1}}}\bigg)^{\pi_{i,m,r}^{n,s}};t\Bigg)\Bigg).
\end{align}
On using \eqref{rholp} in \eqref{pns1212}, we get the required result.
\end{proof}
\begin{remark}
	For $c_{1}=1$ in Theorem \ref{thmpnst12}, the state probabilities of $\{\mathcal{Q}^{\alpha_{1},\alpha_{2}}(t)\}_{t\geq0}$ reduces to that of fractional Erlang queue (see Ascione {\it et al.} (2020), Theorem 5.5).
\end{remark}
\subsection{Queue length process}
Here, we define the queue length process of mixed time-changed Erlang queue as follows:  
\begin{equation*}
	\mathcal{L}^{\alpha_{1},\alpha_{2}}(t)\coloneqq g_{k}(\mathcal{Q}^{\alpha_{1},\alpha_{2}}(t)),\, t\geq0,
\end{equation*}
where $g_{k}(\cdot)$ is a bijective map as defined in \eqref{gk12}. 
Let $p_{n}^{\alpha_{1},\alpha_{2}}(t)=\mathrm{Pr}(\mathcal{L}^{\alpha_{1},\alpha_{2}}(t)=n|\mathcal{L}^{\alpha_{1},\alpha_{2}}(0)=0)$, $n\geq0$
be its state probabilities such that $p_{-n}^{\alpha_{1},\alpha_{2}}(t)=0$. 

The mean queue length of mixed time-changed Erlang queue is defined as
\begin{equation}\label{ML12}
	\mathcal{M}^{\alpha_{1},\alpha_{2}}(t)\coloneqq\mathbb{E}(\mathcal{L}^{\alpha_{1},\alpha_{2}}(t)|\mathcal{L}^{\alpha_{1},\alpha_{2}}(0)=0).
\end{equation}

As $g_{k}(\cdot)$ is a bijective map, the mixed time-changed Erlang queue $\{\mathcal{Q}^{\alpha_{1},\alpha_{2}}(t)\}_{t\geq0}$ is equivalently represented by the queue length process $\{\mathcal{L}^{\alpha_{1},\alpha_{2}}(t)\}_{t\geq0}$. Thus, 
\begin{equation}\label{pngk12}
	p^{\alpha_{1},\alpha_{2}}_{n,s}(t)=p^{\alpha_{1},\alpha_{2}}_{g_{k}(n,s)}(t)
\end{equation}
and
\begin{equation}\label{pab212}
	p_{n}^{\alpha_{1},\alpha_{2}}(t)=p^{\alpha_{1},\alpha_{2}}_{a_{k}(n),b_{k}(n)}(t),
\end{equation}
where $a_{k}(\cdot)$ and $b_{k}(\cdot)$ are as given in \eqref{ak12} and \eqref{bk12}, respectively.

Now, by using \eqref{pngk12} in Theorem \ref{DE12}, we get the following system of differential equations that governs the state probabilities of $\{\mathcal{L}^{\alpha_{1},\alpha_{2}}(t)\}_{t\geq0}$:
{\small\begin{align}\label{DEL12}
	\left.
	\begin{aligned}
		\Big(c_{1}\frac{\mathrm{d}^{\alpha_{1}}}{\mathrm{d}t^{\alpha_{1}}}+c_{2}\frac{\mathrm{d}^{\alpha_{2}}}{\mathrm{d}t^{\alpha_{2}}}\Big)p_{0}^{\alpha_{1},\alpha_{2}}(t)&=-\lambda p_{0}^{\alpha_{1},\alpha_{2}}(t)+k\mu p_{1}^{\alpha_{1},\alpha_{2}}(t),\\
		\Big(c_{1}\frac{\mathrm{d}^{\alpha_{1}}}{\mathrm{d}t^{\alpha_{1}}}+c_{2}\frac{\mathrm{d}^{\alpha_{2}}}{\mathrm{d}t^{\alpha_{2}}}\Big)p_{n}^{\alpha_{1},\alpha_{2}}(t)&=-(\lambda+k\mu) p_{n}^{\alpha_{1},\alpha_{2}}(t)+k\mu p_{n+1}^{\alpha_{1},\alpha_{2}}(t),\,1\leq n\leq k-1,\\
		\Big(c_{1}\frac{\mathrm{d}^{\alpha_{1}}}{\mathrm{d}t^{\alpha_{1}}}+c_{2}\frac{\mathrm{d}^{\alpha_{2}}}{\mathrm{d}t^{\alpha_{2}}}\Big)p_{n}^{\alpha_{1},\alpha_{2}}(t)&=-(\lambda+k\mu) p_{n}^{\alpha_{1},\alpha_{2}}(t)+k\mu p_{n+1}^{\alpha_{1},\alpha_{2}}(t)+\lambda p_{n-k}^{\alpha_{1},\alpha_{2}}(t),\,n\geq k
	\end{aligned}
	\right\}
\end{align}}
with initial condition $p^{\alpha_{1},\alpha_{2}}_{0}(0)=1$. Here, $\frac{\mathrm{d}^{\alpha_{i}}}{\mathrm{d}t^{\alpha_{i}}}$ is a Caputo fractional derivative of order $\alpha_{i}$ defined in \eqref{caputo}.
	
\begin{theorem}\label{thmmalpha12}
	The mean queue length of mixed time-changed Erlang queue solves the following fractional differential equation:
\begin{equation}\label{meanDE12}
	\Big(c_{1}\frac{\mathrm{d}^{\alpha_{1}}}{\mathrm{d}t^{\alpha_{1}}}+c_{2}\frac{\mathrm{d}^{\alpha_{2}}}{\mathrm{d}t^{\alpha_{2}}}\Big)\mathcal{M}^{\alpha_{1},\alpha_{2}}(t)=k(\lambda-\mu)+k\mu p_{0}^{\alpha_{1},\alpha_{2}}(t),
\end{equation}
with initial condition $\mathcal{M}^{\alpha_{1},\alpha_{2}}(0)=0$.
\end{theorem}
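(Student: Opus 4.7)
The plan is to differentiate the series representation of $\mathcal{M}^{\alpha_1,\alpha_2}(t)$ termwise and then telescope using the system \eqref{DEL12}. Start from the definition in \eqref{ML12}, which gives
\begin{equation*}
\mathcal{M}^{\alpha_1,\alpha_2}(t)=\sum_{n=1}^{\infty} n\, p_n^{\alpha_1,\alpha_2}(t).
\end{equation*}
Apply the mixed operator $L\coloneqq c_1\frac{\mathrm{d}^{\alpha_1}}{\mathrm{d}t^{\alpha_1}}+c_2\frac{\mathrm{d}^{\alpha_2}}{\mathrm{d}t^{\alpha_2}}$ to both sides and exchange with the sum (the linearity of the Caputo derivatives together with absolute convergence of the series of $n p_n^{\alpha_1,\alpha_2}(t)$ makes this legitimate, at least formally).

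Next, I would substitute $Lp_n^{\alpha_1,\alpha_2}(t)$ using the three cases in \eqref{DEL12}. Splitting the sum at $n=k$ yields
\begin{equation*}
L\mathcal{M}^{\alpha_1,\alpha_2}(t)=-(\lambda+k\mu)\sum_{n=1}^{\infty} n p_n^{\alpha_1,\alpha_2}(t)+k\mu\sum_{n=1}^{\infty} n p_{n+1}^{\alpha_1,\alpha_2}(t)+\lambda\sum_{n=k}^{\infty} n p_{n-k}^{\alpha_1,\alpha_2}(t).
\end{equation*}
Now reindex. The first sum is just $-(\lambda+k\mu)\mathcal{M}^{\alpha_1,\alpha_2}(t)$. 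In the second, set $m=n+1$ and use $\sum_{m=0}^{\infty} p_m^{\alpha_1,\alpha_2}(t)=1$ to get $k\mu(\mathcal{M}^{\alpha_1,\alpha_2}(t)+p_0^{\alpha_1,\alpha_2}(t)-1)$. In the third, set $m=n-k$ to obtain $\lambda\mathcal{M}^{\alpha_1,\alpha_2}(t)+\lambda k$. Adding these, the $\mathcal{M}^{\alpha_1,\alpha_2}(t)$ contributions cancel and what remains is $k(\lambda-\mu)+k\mu p_0^{\alpha_1,\alpha_2}(t)$, which is \eqref{meanDE12}. The initial condition $\mathcal{M}^{\alpha_1,\alpha_2}(0)=0$ is immediate from $p_0^{\alpha_1,\alpha_2}(0)=1$ and $p_n^{\alpha_1,\alpha_2}(0)=0$ for $n\geq 1$.

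The main obstacle will be justifying the termwise application of the fractional derivative and, more essentially, confirming that the series defining $\mathcal{M}^{\alpha_1,\alpha_2}(t)$ converges for $t\geq 0$ so that the normalization identity $\sum_m p_m^{\alpha_1,\alpha_2}(t)=1$ can be used to close the telescoping. Convergence can be argued by noting that $\mathcal{M}^{\alpha_1,\alpha_2}(t)=\mathbb{E}\,\mathcal{L}(Y_{\alpha_1,\alpha_2}(t))$ and conditioning on the inverse mixed stable subordinator, since the analogous mean length for the classical Erlang queue is finite for every fixed time. The remaining algebra is the clean reindexing described above.
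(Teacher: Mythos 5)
Your proposal is correct and follows essentially the same route as the paper: both apply the mixed Caputo operator termwise to $\mathcal{M}^{\alpha_1,\alpha_2}(t)=\sum_{n}np_n^{\alpha_1,\alpha_2}(t)$, substitute the system \eqref{DEL12}, and reindex the three sums using $\sum_m p_m^{\alpha_1,\alpha_2}(t)=1$ to obtain $k(\lambda-\mu)+k\mu p_0^{\alpha_1,\alpha_2}(t)$. Your added remarks on justifying the termwise differentiation and the convergence of the mean series go slightly beyond what the paper records, but the core computation is identical.
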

\begin{proof}
From \eqref{ML12}, we have
\begin{equation}\label{dtmt12}
	\mathcal{M}^{\alpha_{1},\alpha_{2}}(t)=\sum_{n=0}^{\infty}np_{n}^{\alpha_{1},\alpha_{2}}(t).
\end{equation}
Note that $\mathcal{M}^{\alpha_{1},\alpha_{2}}(0)=0$ as $p_{0}^{\alpha_{1},\alpha_{2}}(0)=1$. 
On using \eqref{DEL12}, we get
\begin{align}
	\Big(c_{1}\frac{\mathrm{d}^{\alpha_{1}}}{\mathrm{d}t^{\alpha_{1}}}+c_{2}\frac{\mathrm{d}^{\alpha_{2}}}{\mathrm{d}t^{\alpha_{2}}}\Big)\mathcal{M}^{\alpha_{1},\alpha_{2}}(t)
	&=-(\lambda + k \mu)\mathcal{M}^{\alpha_{1},\alpha_{2}}(t)+k\mu\sum_{n=1}^{\infty}np_{n+1}^{\alpha_{1},\alpha_{2}}(t)+\lambda\sum_{n=k}^{\infty}np_{n-k}^{\alpha_{1},\alpha_{2}}(t)\nonumber\\
	&=-(\lambda + k \mu)\mathcal{M}^{\alpha_{1},\alpha_{2}}(t)+k\mu(\mathcal{M}^{\alpha_{1},\alpha_{2}}(t)+p_{0}^{\alpha_{1},\alpha_{2}}(t)-1)\nonumber\\
	&\ \ +\lambda(\mathcal{M}^{\alpha_{1},\alpha_{2}}(t)+k)\nonumber\\
	&=k(\lambda-\mu)+k\mu p_{0}^{\alpha_{1},\alpha_{2}}(t).
\end{align}
This proves the result.	
\end{proof}
\begin{remark}
	For $c_{1}=1$ or $c_{2}=1$ in \eqref{meanDE12}, it reduces to the governing differential equation for mean queue length of fractional Erlang queue (see Ascione {\it et al.} (2020), Eq. (42)).
\end{remark}
\begin{theorem}
Let $E_{\alpha,\beta}^{\gamma}(\cdot)$ be three parameter Mittag-Leffler function as defined in \eqref{Mittag12} and
\begin{equation}\label{hN12}
	h_{N}(t)=t^{\alpha_{1}-\big(\frac{N-1}{N}\big)}\sum_{j=0}^{\infty}\Big(-\frac{c_{2}}{c_{1}}\Big)^{j}t^{(\alpha_{1}-\alpha_{2})j}E^{j+1}_{\alpha_{1},\alpha_{1}+(\alpha_{1}-\alpha_{2})j+\frac{1}{N}}\Big(-\Big(\frac{\lambda+k\mu}{c_{1}}\Big)t^{\alpha_{1}}\Big).
\end{equation}
Then, the mean queue length of mixed time-changed Erlang queue is given by  
\begin{align}\label{malpahat12}
	\mathcal{M}^{\alpha_{1},\alpha_{2}}(t)&=\frac{k(\lambda-\mu)}{c_{1}}t^{\alpha_{1}}E^{1}_{\alpha_{1}-\alpha_{2},\alpha_{1}+1}\Big(-\frac{c_{2}}{c_{1}}t^{\alpha_{1}-\alpha_{2}}\Big)+k\mu\sum_{m=1}^{\infty}\sum_{r=0}^{\infty}\frac{A_{m,r}^{0}}{c_{1}^{a_{m,r}^{0}}}h_{a_{m,r}^{0}}^{*(a_{m,r}^{0})}(t),
\end{align}
where $a_{m,r}^{0}$ and $A_{m,r}^{0}$ are given in \eqref{amr12} and \eqref{Amr12}, respectively.
\end{theorem}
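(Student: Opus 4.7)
The plan is to Laplace-transform the fractional ODE of Theorem \ref{thmmalpha12}, solve algebraically, and invert term by term using \eqref{rholp} together with the identity $c_{1}z^{\alpha_{1}-1}+c_{2}z^{\alpha_{2}-1}=(c_{1}z^{\alpha_{1}}+c_{2}z^{\alpha_{2}})/z$, which is what lets the factor from $\tilde{p}_{0}^{\alpha_{1},\alpha_{2}}(z)$ cancel against the mixed symbol coming from the Caputo operator. Applying \eqref{caputolp} to \eqref{meanDE12} with $\mathcal{M}^{\alpha_{1},\alpha_{2}}(0)=0$ gives
\begin{equation*}
(c_{1}z^{\alpha_{1}}+c_{2}z^{\alpha_{2}})\tilde{\mathcal{M}}^{\alpha_{1},\alpha_{2}}(z)=\frac{k(\lambda-\mu)}{z}+k\mu\,\tilde{p}_{0}^{\alpha_{1},\alpha_{2}}(z),
\end{equation*}
so that $\tilde{\mathcal{M}}^{\alpha_{1},\alpha_{2}}(z)$ splits as a sum of two pieces that I will invert separately.

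For the first piece $k(\lambda-\mu)/(z(c_{1}z^{\alpha_{1}}+c_{2}z^{\alpha_{2}}))$, I factor $c_{1}$ out of the denominator and rewrite it as $(k(\lambda-\mu)/c_{1})\cdot z^{\rho-1}/(z^{\alpha_{1}}+(c_{2}/c_{1})z^{\alpha_{2}}+b)$ with $\rho=0$ and $b=0$. Applying \eqref{rholp} produces the series $t^{\alpha_{1}}\sum_{h\geq 0}(-c_{2}/c_{1})^{h}t^{(\alpha_{1}-\alpha_{2})h}E^{h+1}_{\alpha_{1},\alpha_{1}+(\alpha_{1}-\alpha_{2})h+1}(0)$, and since $E^{\gamma}_{\alpha,\beta}(0)=1/\Gamma(\beta)$, this series collapses exactly to $E^{1}_{\alpha_{1}-\alpha_{2},\alpha_{1}+1}(-(c_{2}/c_{1})t^{\alpha_{1}-\alpha_{2}})$, yielding the first summand of \eqref{malpahat12}.

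For the second piece, I substitute Proposition \ref{Lpp012} and use the identity $c_{1}z^{\alpha_{1}-1}+c_{2}z^{\alpha_{2}-1}=(c_{1}z^{\alpha_{1}}+c_{2}z^{\alpha_{2}})/z$, which kills the pre-factor $c_{1}z^{\alpha_{1}}+c_{2}z^{\alpha_{2}}$ dividing $\tilde{p}_{0}^{\alpha_{1},\alpha_{2}}(z)/(c_{1}z^{\alpha_{1}}+c_{2}z^{\alpha_{2}})$ and leaves
\begin{equation*}
k\mu\sum_{m=1}^{\infty}\sum_{r=0}^{\infty}\frac{A_{m,r}^{0}}{c_{1}^{a_{m,r}^{0}}}\cdot\frac{z^{-1}}{\bigl(z^{\alpha_{1}}+(c_{2}/c_{1})z^{\alpha_{2}}+(\lambda+k\mu)/c_{1}\bigr)^{a_{m,r}^{0}}}.
\end{equation*}
The key observation is that for $N=a_{m,r}^{0}$ the choice $\rho=(N-1)/N$ gives $N(\rho-1)=-1$, so the inner factor $z^{(N-1)/N-1}/(z^{\alpha_{1}}+(c_{2}/c_{1})z^{\alpha_{2}}+(\lambda+k\mu)/c_{1})$ has its $N$-th power equal precisely to $z^{-1}/(\ldots)^{N}$. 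Inverting the single factor via \eqref{rholp} yields exactly $h_{N}(t)$ as defined in \eqref{hN12}, and the convolution theorem promotes the $N$-th power to the $N$-fold convolution $h_{N}^{*(N)}(t)$.

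The main obstacle is the bookkeeping in the last step: matching the exponent $\rho=(N-1)/N$ so that the $N$-fold product of a single Mittag-Leffler Laplace pair reproduces the desired $z^{-1}$ factor, and then verifying that the parameters inside the three-parameter Mittag-Leffler produced by \eqref{rholp} align term-by-term with \eqref{hN12}. Once that alignment is checked and the two pieces are summed, the claimed identity \eqref{malpahat12} follows.
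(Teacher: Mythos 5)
Your proposal is correct, and the inversion step (the choice $\rho=(N-1)/N$ so that the $N$-th power of $z^{(N-1)/N-1}/(z^{\alpha_{1}}+(c_{2}/c_{1})z^{\alpha_{2}}+(\lambda+k\mu)/c_{1})$ equals $z^{-1}/(\cdot)^{N}$, giving $h_{N}^{*(N)}$) is exactly the paper's. Where you differ is in how you reach the Laplace transform $\tilde{\mathcal{M}}^{\alpha_{1},\alpha_{2}}(z)$: the paper does not touch the fractional ODE at all, but instead starts from the subordination identity $\mathcal{M}^{\alpha_{1},\alpha_{2}}(t)=\int_{0}^{\infty}\mathcal{M}(y)\,\mathrm{Pr}\{Y_{\alpha_{1},\alpha_{2}}(t)\in\mathrm{d}y\}$, substitutes Luchak's classical formula $\mathcal{M}(y)=k(\lambda-\mu)y+k\mu\int_{0}^{y}p_{0}(u)\,\mathrm{d}u$ for the mean of the ordinary Erlang queue, and evaluates the resulting double integrals via \eqref{mixedlp12} and \eqref{p0alphalp12}. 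You instead Laplace-transform the already-proved governing equation \eqref{meanDE12} with $\mathcal{M}^{\alpha_{1},\alpha_{2}}(0)=0$ and substitute Proposition \ref{Lpp012}; the identity $c_{1}z^{\alpha_{1}-1}+c_{2}z^{\alpha_{2}-1}=(c_{1}z^{\alpha_{1}}+c_{2}z^{\alpha_{2}})/z$ then yields the same expression $\frac{k(\lambda-\mu)}{c_{1}z^{\alpha_{1}+1}+c_{2}z^{\alpha_{2}+1}}+\frac{k\mu}{z}\sum_{m,r}A_{m,r}^{0}(\lambda+k\mu+c_{1}z^{\alpha_{1}}+c_{2}z^{\alpha_{2}})^{-a_{m,r}^{0}}$ that the paper obtains. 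Your route is shorter and reuses Theorem \ref{thmmalpha12} rather than importing Luchak's result a second time, at the cost of implicitly assuming that $\mathcal{M}^{\alpha_{1},\alpha_{2}}$ is determined by its Laplace transform (harmless here, and the paper makes the same kind of tacit assumption throughout). One cosmetic difference: for the first piece the paper reads off the inverse directly from \eqref{ltm12} as a single two-parameter Mittag-Leffler term, whereas you apply \eqref{rholp} with $b=0$ and collapse the series using $E_{\alpha,\beta}^{\gamma}(0)=1/\Gamma(\beta)$; both give $\frac{k(\lambda-\mu)}{c_{1}}t^{\alpha_{1}}E_{\alpha_{1}-\alpha_{2},\alpha_{1}+1}^{1}(-\tfrac{c_{2}}{c_{1}}t^{\alpha_{1}-\alpha_{2}})$.
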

\begin{proof}
By using \eqref{pnsalphat12} and \eqref{pab212} in \eqref{dtmt12}, we get
\begin{align}\label{ML121}
	\mathcal{M}^{\alpha_{1},\alpha_{2}}(t)&=\int_{0}^{\infty}\mathcal{M}(y)\mathrm{Pr}\{Y_{\alpha_{1},\alpha_{2}}(t)\in\mathrm{d}y\}\nonumber\\
	&=k(\lambda-\mu)\int_{0}^{\infty}y\mathrm{Pr}\{Y_{\alpha_{1},\alpha_{2}}(t)\in\mathrm{d}y\}+k\mu\int_{0}^{\infty}\int_{0}^{y}p_{0}(u)\mathrm{d}u\mathrm{Pr}\{Y_{\alpha_{1},\alpha_{2}}(t)\in\mathrm{d}y\},
\end{align}
where the second equality follows from Eq. (21) of Luchak (1958).
On taking the Laplace transform on both sides of \eqref{ML121} and using \eqref{mixedlp12}, we get
\begin{align}\label{ML1212}
	\tilde{\mathcal{M}}^{\alpha_{1},\alpha_{2}}(z)
    &=(c_{1}z^{\alpha_{1}-1}+c_{2}z^{\alpha_{2}-1})\Big(k(\lambda-\mu)\int_{0}^{\infty}ye^{-(c_{1}z^{\alpha_{1}}+c_{2}z^{\alpha_{2}})y}\mathrm{d}y\nonumber\\ 
	&\hspace{7.2cm} +k\mu\int_{0}^{\infty}\int_{0}^{y}p_{0}(u)e^{-(c_{1}z^{\alpha_{1}}+c_{2}z^{\alpha_{2}})y}\mathrm{d}u\mathrm{d}y\Big)\nonumber\\
	&=(c_{1}z^{\alpha_{1}-1}+c_{2}z^{\alpha_{2}-1})\Big(\frac{k(\lambda-\mu)}{(c_{1}z^{\alpha_{1}}+c_{2}z^{\alpha_{2}})^{2}}+k\mu\int_{0}^{\infty}p_{0}(u)\int_{u}^{\infty}e^{-(c_{1}z^{\alpha_{1}}+c_{2}z^{\alpha_{2}})y}\mathrm{d}y\mathrm{d}u\Big)\nonumber\\
	&=(c_{1}z^{\alpha_{1}-1}+c_{2}z^{\alpha_{2}-1})\Big(\frac{k(\lambda-\mu)}{(c_{1}z^{\alpha_{1}}+c_{2}z^{\alpha_{2}})^{2}}\nonumber\\
	&\hspace{6.1cm} +\frac{k\mu}{(c_{1}z^{\alpha_{1}}+c_{2}z^{\alpha_{2}})}\int_{0}^{\infty}p_{0}(u) e^{-(c_{1}z^{\alpha_{1}}+c_{2}z^{\alpha_{2}})u}\mathrm{d}u\Big)\nonumber\\
	&=\frac{k(\lambda-\mu)}{c_{1}z^{\alpha_{1}+1}+c_{2}z^{\alpha_{2}+1}}+\frac{k\mu}{z}\int_{0}^{\infty}p_{0}(u) e^{-(c_{1}z^{\alpha_{1}}+c_{2}z^{\alpha_{2}})y}\mathrm{d}u\nonumber\\
	&=\frac{k(\lambda-\mu)}{c_{1}z^{\alpha_{1}+1}+c_{2}z^{\alpha_{2}+1}}+\frac{k\mu}{z}\frac{\tilde{p_{0}}^{\alpha_{1},\alpha_{2}}(z)}{(c_{1}z^{\alpha_{1}-1}+c_{2}z^{\alpha_{2}-1})}\,\,(\text{by using \eqref{p0alphalp12}})\nonumber\\
	&=\frac{k(\lambda-\mu)}{c_{1}z^{\alpha_{1}+1}+c_{2}z^{\alpha_{2}+1}}+\frac{k\mu}{z}\sum_{m=1}^{\infty}\sum_{r=0}^{\infty}\frac{A_{m,r}^{0}}{(\lambda+k\mu+c_{1}z^{\alpha_{1}}+c_{2}z^{\alpha_{2}})^{a_{m,r}^{0}}},
\end{align}
where we have used \eqref{p0lp121} in the last step.

On taking the inverse Laplace transform on both sides of \eqref{ML1212}, we get
\begin{align*}
	\mathcal{M}^{\alpha_{1},\alpha_{2}}(t)&=\frac{k(\lambda-\mu)}{c_{1}}\mathbb{L}^{-1}\Big(\frac{z^{-\alpha_{2}-1}}{z^{\alpha_{1}-\alpha_{2}}+\frac{c_{2}}{c_{1}}};t\Big)\nonumber\\
	&\ \  +k\mu\sum_{m=1}^{\infty}\sum_{r=0}^{\infty} \frac{A_{m,r}^{0}}{c_{1}^{a_{m,r}^{0}}}\mathbb{L}^{-1}\bigg(\bigg(\frac{z^{\frac{-1}{a_{m,r}^{0}}}}{z^{\alpha_{1}}+\frac{c_{2}}{c_{1}}z^{\alpha_{2}}+\frac{\lambda+k\mu}{c_{1}}}\bigg)^{a_{m,r}^{0}};t\bigg).
\end{align*}
From \eqref{ltm12} and \eqref{rholp}, the required result follows.
\end{proof}
\begin{remark}
	For $c_{1}=1$ in \eqref{malpahat12}, we get the mean queue length of fractional Erlang queue (see Ascione {\it et al.} (2020), Eq. (47)).
\end{remark}
\subsection{Inter-arrival and inter-phase times of mixed time changed Erlang queue}
Here, we view the mixed time-changed Erlang queue in terms of its inter-arrival and service times. 

Let us denote the following:
\begin{align*}
	q_{n}^{\alpha_{1},\alpha_{2}}(t)&=\mathrm{Pr}(\mathcal{N}^{\alpha_{1},\alpha_{2}}(t)=n|\mathcal{Q}^{\alpha_{1},\alpha_{2}}(0)=(0,0)),\,n\geq1,\\
	r_{s}^{\alpha_{1},\alpha_{2}}(t)&=\mathrm{Pr}(\mathcal{S}^{\alpha_{1},\alpha_{2}}(t)=s|\mathcal{Q}^{\alpha_{1},\alpha_{2}}(0)=(0,0)),\,1\leq s\leq k.
\end{align*}
Then, 
\begin{equation*}
	q_{n}^{\alpha_{1},\alpha_{2}}(t)=\sum_{s=1}^{k}p_{n,s}^{\alpha_{1},\alpha_{2}}(t) \text{ and }
	r_{s}^{\alpha_{1},\alpha_{2}}(t)=\sum_{n=1}^{\infty}p_{n,s}^{\alpha_{1},\alpha_{2}}(t).
\end{equation*}
For $1\leq s\leq k$, we add the differential equations given in Theorem \ref{DE12} to obtain
\begin{align*}
	\Big(c_{1}\frac{\mathrm{d}^{\alpha_{1}}}{\mathrm{d}t^{\alpha_{1}}}+c_{2}\frac{\mathrm{d}^{\alpha_{2}}}{\mathrm{d}t^{\alpha_{2}}}\Big)p_{0}^{\alpha_{1},\alpha_{2}}(t)&=-\lambda p_{0}^{\alpha_{1},\alpha_{2}}(t)+k\mu p_{1,1}^{\alpha_{1},\alpha_{2}}(t),\\
	\Big(c_{1}\frac{\mathrm{d}^{\alpha_{1}}}{\mathrm{d}t^{\alpha_{1}}}+c_{2}\frac{\mathrm{d}^{\alpha_{2}}}{\mathrm{d}t^{\alpha_{2}}}\Big)q_{1}^{\alpha_{1},\alpha_{2}}(t)&=-\lambda q_{1}^{\alpha_{1},\alpha_{2}}(t)+k\mu(p_{2,1}^{\alpha_{1},\alpha_{2}}(t)-p_{1,1}^{\alpha_{1},\alpha_{2}}(t))+\lambda p_{0}^{\alpha_{1},\alpha_{2}}(t),\\
	\Big(c_{1}\frac{\mathrm{d}^{\alpha_{1}}}{\mathrm{d}t^{\alpha_{1}}}+c_{2}\frac{\mathrm{d}^{\alpha_{2}}}{\mathrm{d}t^{\alpha_{2}}}\Big)q_{n}^{\alpha_{1},\alpha_{2}}(t)&=-\lambda q_{n}^{\alpha_{1},\alpha_{2}}(t)+k\mu(p_{n+1,1}^{\alpha_{1},\alpha_{2}}(t)-p_{n,1}^{\alpha_{1},\alpha_{2}}(t))+\lambda q_{n-1}^{\alpha_{1},\alpha_{2}}(t),\,n\geq 2,
\end{align*} 
with $p_{0}^{\alpha_{1},\alpha_{2}}(0)=1$ and $q_{n}^{\alpha_{1},\alpha_{2}}(0)=0$, $n\geq1$.

Similarly, for the Erlang queue, we have (see Ascione {\it et al.} (2020), p. 3257)
\begin{align*}
    \frac{\mathrm{d}}{\mathrm{d}t}p_{0}(t)&=-\lambda p_{0}(t)+k\mu p_{1,1}(t),\\
	\frac{\mathrm{d}}{\mathrm{d}t}q_{1}(t)&=-\lambda q_{1}(t)+k\mu(p_{2,1}(t)-p_{1,1}(t))+\lambda p_{0}(t),\\
	\frac{\mathrm{d}}{\mathrm{d}t}q_{n}(t)&=-\lambda q_{n}(t)+k\mu(p_{n+1,1}(t)-p_{n,1}(t))+\lambda q_{n-1}(t),\,n\geq 2,
\end{align*}
with $p_{0}(0)=1$ and $q_{n}(0)=0$, $n\geq1$. 
Here,  
\begin{align*}
	q_{n}(t)&=\mathrm{Pr}(\mathcal{N}(t)=n|\mathcal{Q}(0)=(0,0)),\,n\geq1,\\
	r_{s}(t)&=\mathrm{Pr}(\mathcal{S}(t)=s|\mathcal{Q}(0)=(0,0)),\,1\leq s\leq k.
\end{align*}
Note that $q_{n}(t)=\sum_{s=1}^{k}p_{n,s}(t) \text{ and }
r_{s}(t)=\sum_{n=1}^{\infty}p_{n,s}(t)$.

Recall that the arrival time is the random time at which the first arrival occurs, and the inter-arrival time is defined as the time between two successive arrivals.
\begin{theorem}\label{thminterarrival}
The inter-arrival times $T^{\alpha_{1},\alpha_{2}}$ of the mixed time-changed Erlang queue $\{\mathcal{Q}^{\alpha_{1},\alpha_{2}}(t)\}_{t\geq0}$ are independent and identically distributed (iid) with the following distribution:
\begin{align}\label{intrarr12}
	\mathrm{Pr}(T^{\alpha_{1},\alpha_{2}}>t)	&=\sum_{r=0}^{\infty}\Big(-\frac{c_{2}}{c_{1}}\Big)^{r}t^{(\alpha_{1}-\alpha_{2})r}E^{r+1}_{\alpha_{1},(\alpha_{1}-\alpha_{2})r+1}\Big(-\frac{\lambda}{c_{1}}t^{\alpha_{1}}\Big)\nonumber\\
	&\hspace{1.8cm} -\sum_{r=0}^{\infty}\Big(-\frac{c_{2}}{c_{1}}\Big)^{r+1}t^{(\alpha_{1}-\alpha_{2})(r+1)}E^{r+1}_{\alpha_{1},(\alpha_{1}-\alpha_{2})(r+1)+1}\Big(-\frac{\lambda}{c_{1}}t^{\alpha_{1}}\Big),
	\end{align}
where $E_{\alpha,\beta}^{\gamma}(\cdot)$ is three parameter Mittag-Leffler function as defined in \eqref{Mittag12}.
\end{theorem}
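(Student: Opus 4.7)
The plan is to first recognize that the first inter-arrival time satisfies a first-passage identity. Letting $T$ denote the first arrival time of the Erlang queue $\{\mathcal{Q}(t)\}_{t\geq 0}$ (which is $\mathrm{Exp}(\lambda)$ distributed), the monotonicity and continuity of $Y_{\alpha_{1},\alpha_{2}}$ give
\begin{equation*}
\{T^{\alpha_{1},\alpha_{2}}>t\}=\{Y_{\alpha_{1},\alpha_{2}}(t)<T\},
\end{equation*}
so that conditioning on $Y_{\alpha_{1},\alpha_{2}}(t)$ and using independence of $T$ from the subordinator produces
\begin{equation*}
\mathrm{Pr}(T^{\alpha_{1},\alpha_{2}}>t)=\int_{0}^{\infty}e^{-\lambda y}f^{\alpha_{1},\alpha_{2}}(t,y)\,\mathrm{d}y.
\end{equation*}
Taking the Laplace transform in $t$ and using \eqref{mixedlp12} yields immediately
\begin{equation*}
\int_{0}^{\infty}e^{-zt}\mathrm{Pr}(T^{\alpha_{1},\alpha_{2}}>t)\,\mathrm{d}t=\frac{c_{1}z^{\alpha_{1}-1}+c_{2}z^{\alpha_{2}-1}}{c_{1}z^{\alpha_{1}}+c_{2}z^{\alpha_{2}}+\lambda}.
\end{equation*}

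Next I would split this expression (after dividing numerator and denominator by $c_{1}$) as
\begin{equation*}
\frac{z^{\alpha_{1}-1}}{z^{\alpha_{1}}+(c_{2}/c_{1})z^{\alpha_{2}}+\lambda/c_{1}}+\frac{c_{2}}{c_{1}}\cdot\frac{z^{\alpha_{2}-1}}{z^{\alpha_{1}}+(c_{2}/c_{1})z^{\alpha_{2}}+\lambda/c_{1}},
\end{equation*}
and invert each term using \eqref{rholp} with $\alpha=\alpha_{1}$, $\beta=\alpha_{2}$, $a=c_{2}/c_{1}$, $b=\lambda/c_{1}$. The first piece is inverted by the choice $\rho=\alpha_{1}$; the resulting expression reproduces the first sum in \eqref{intrarr12} verbatim. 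For the second piece I take $\rho=\alpha_{2}$; this produces a prefactor $t^{\alpha_{1}-\alpha_{2}}$ and a Mittag--Leffler second parameter $\alpha_{1}+(\alpha_{1}-\alpha_{2})h-\alpha_{2}+1=(\alpha_{1}-\alpha_{2})(h+1)+1$. Combining with the outer factor $c_{2}/c_{1}$ and using the identity $(c_{2}/c_{1})(-c_{2}/c_{1})^{h}=-(-c_{2}/c_{1})^{h+1}$ puts the second contribution exactly in the form of the subtracted sum in \eqref{intrarr12}, which gives the claimed distribution of $T^{\alpha_{1},\alpha_{2}}$.

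For the iid assertion I would appeal to the renewal-type representation familiar from the fractional Poisson case: the successive arrival epochs of the underlying Erlang queue are separated by iid $\mathrm{Exp}(\lambda)$ gaps $T_{1},T_{2},\dots$ independent of $Y_{\alpha_{1},\alpha_{2}}$, and each successive inter-arrival time of the time-changed queue is obtained by applying the same functional $T_{i}\mapsto\inf\{t:Y_{\alpha_{1},\alpha_{2}}(t)\geq Y_{\alpha_{1},\alpha_{2}}(S_{i-1})+T_{i}\}$, which is distributed as a single copy of $T^{\alpha_{1},\alpha_{2}}$ by the argument above applied at the epoch $S_{i-1}$. The iid structure is thereby transferred from the underlying exponential gaps.

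The main obstacle I anticipate is the algebraic bookkeeping in the inverse Laplace step: the two pieces call for different values of $\rho$ in \eqref{rholp}, and recovering the precise form of the subtracted sum in \eqref{intrarr12} requires the index shift $h\mapsto h+1$ together with careful absorption of the sign into $(-c_{2}/c_{1})^{h+1}$. The hypotheses $\alpha_{1}-\rho+1>0$ needed to apply \eqref{rholp} are satisfied for both $\rho=\alpha_{1}$ and $\rho=\alpha_{2}$ since $0<\alpha_{2}<\alpha_{1}<1$, so no additional convergence analysis is required.
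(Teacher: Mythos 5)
Your proof is correct, and it takes a recognizably different route from the paper's. The paper constructs an auxiliary absorbing counting process $\{\mathcal{N}_{*}(t)\}_{t\geq0}$ (start at $m$, absorb at $m+1$), shows via Laplace transforms that its time-changed state probabilities solve the fractional relaxation equation $\big(c_{1}\frac{\mathrm{d}^{\alpha_{1}}}{\mathrm{d}t^{\alpha_{1}}}+c_{2}\frac{\mathrm{d}^{\alpha_{2}}}{\mathrm{d}t^{\alpha_{2}}}\big){\textbf q}_{m}^{\alpha_{1},\alpha_{2}}(t)=-\lambda{\textbf q}_{m}^{\alpha_{1},\alpha_{2}}(t)$, and then imports the closed-form solution from Beghin (2012), Eq.\ (2.41); the iid claim is obtained from the semi-Markov structure of $\{\mathcal{N}^{\alpha_{1},\alpha_{2}}(t)\}_{t\geq0}$ and the time-homogeneity of the embedded chain at jump times. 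You bypass the fractional Cauchy problem entirely: the first-passage identity $\{T^{\alpha_{1},\alpha_{2}}>t\}=\{Y_{\alpha_{1},\alpha_{2}}(t)<T\}$ gives $\mathrm{Pr}(T^{\alpha_{1},\alpha_{2}}>t)=\mathbb{E}(e^{-\lambda Y_{\alpha_{1},\alpha_{2}}(t)})$, whose Laplace transform via \eqref{mixedlp12} is exactly the expression recorded in the paper's remark following Theorem \ref{thminterarrival}, and your two-term split with $\rho=\alpha_{1}$ and $\rho=\alpha_{2}$ in \eqref{rholp}, together with the index shift $h\mapsto h+1$ and sign absorption, reproduces \eqref{intrarr12} exactly (I checked the second parameter $\alpha_{1}+(\alpha_{1}-\alpha_{2})h-\alpha_{2}+1=(\alpha_{1}-\alpha_{2})(h+1)+1$ and the prefactor bookkeeping). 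What your approach buys is self-containedness — the inversion uses only the paper's own \eqref{rholp} rather than an external solved equation — and it makes transparent why the answer is the tail of $D_{\alpha_{1},\alpha_{2}}(X)$ with $X\sim\mathrm{Exp}(\lambda)$, which the paper only establishes later as Lemma \ref{lemma1}. What the paper's approach buys is the fractional differential characterization of the inter-arrival distribution, which it reuses for the inter-phase and sojourn-time theorems. One small caution on your iid argument: the functional you write down, $\inf\{t:Y_{\alpha_{1},\alpha_{2}}(t)\geq Y_{\alpha_{1},\alpha_{2}}(S_{i-1})+T_{i}\}$, is the $i$th arrival epoch rather than the $i$th gap; the clean statement is that the arrival epochs of the time-changed queue are $D_{\alpha_{1},\alpha_{2}}(S_{i})$ (a.s., since the subordinator does not jump at the independent continuous times $S_{i}$), so the gaps are $D_{\alpha_{1},\alpha_{2}}(S_{i})-D_{\alpha_{1},\alpha_{2}}(S_{i-1})$, which are iid copies of $D_{\alpha_{1},\alpha_{2}}(T_{1})$ by the stationary independent increments of the subordinator and the independence of the exponential gaps. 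With that phrasing your renewal argument is airtight and is arguably more elementary than the paper's appeal to semi-regeneration.
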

\begin{proof}
Note that the inter-arrival times of mixed time-changed Erlang queue are independent. To obtain the distribution of inter-arrival times, let us define a process $\{\mathcal{N}_{*}(t)\}_{t\geq0}$ whose state probabilities ${\textbf q}_{n}(t)=\mathrm{Pr}(\mathcal{N}_{*}(t)=n)$, $n\geq0$ solve the following Cauchy problem:
\begin{align}\label{qmDE12}
	\left.
	\begin{aligned}
		\frac{\mathrm{d}}{\mathrm{d}t}{\textbf q}_{m}(t)&=-\lambda{\textbf q}_{m}(t),\\
		\frac{\mathrm{d}}{\mathrm{d}t}{\textbf q}_{m+1}(t)&=\lambda{\textbf q}_{m}(t),\\
		\frac{\mathrm{d}}{\mathrm{d}t}{\textbf q}_{n}(t)&=0,\,n\geq0,\,n\neq m, \,m+1,
	\end{aligned}
	\right\}
\end{align}
with initial conditions ${\textbf q}_{m}(0)=1$ and ${\textbf q}_{n}(0)=0$ for $n\geq0$, $n\neq m$. Here, $\{\mathcal{N}_{*}(t)\}_{t\geq0}$ is a counting process with the same arrival rate $\lambda$ as that of $\{\mathcal{N}(t)\}_{t\geq0}$. It starts at $m$ with $m+1$ as its absorbing state. Observe that ${\textbf q}_{m}(t)+{\textbf q}_{m+1}(t)=1$, $t\geq0$. 

Let us denote $T$ as the arrival time of first new customer in the Erlang queue $\{\mathcal{Q}(t)\}_{t\geq0}$ starting from $\mathcal{N}_{*}(0)=m$. Then, its distribution function $F_{T}(\cdot)$ is given by
\begin{align*}
	F_{T}(t)\coloneq\mathrm{Pr}(T\leq t)&=1-\mathrm{Pr}(T>t)\\
	&=1-\mathrm{Pr}(\mathcal{N}_{*}(t)=m)\\
	&=\mathrm{Pr}(\mathcal{N}_{*}(t)=m+1)=\textbf{q}_{m+1}(t).
\end{align*} 

Let $\mathcal{N}_{*}^{\alpha_{1},\alpha_{2}}(t)=\mathcal{N}_{*}(Y_{\alpha_{1},\alpha_{2}}(t))$, $t\geq0$ be the time-changed process whose state probabilities are denoted by 
${\textbf q}_{m}^{\alpha_{1},\alpha_{2}}(t)=\mathrm{Pr}(\mathcal{N}_{*}^{\alpha_{1},\alpha_{2}}(t)=m) \text{ and } {\textbf q}_{m+1}^{\alpha_{1},\alpha_{2}}(t)=\mathrm{Pr}(\mathcal{N}_{*}^{\alpha_{1},\alpha_{2}}(t)=m+1)$.
So, $\{\mathcal{N}_{*}^{\alpha_{1},\alpha_{2}}(t)\}_{t\geq0}$ is a process which starts at $m$ with $m+1$ being the absorbing state. Let $T^{\alpha_{1},\alpha_{2}}$ be the arrival time of the first new customer in the mixed time-changed Erlang queue $\{\mathcal{Q}^{\alpha_{1},\alpha_{2}}(t)\}_{t\geq0}$ starting from $\mathcal{N}_{*}^{\alpha_{1},\alpha_{2}}(0)=m$. Thus, $F_{T^{\alpha_{1},\alpha_{2}}}(t)=\mathrm{Pr}(T^{\alpha_{1},\alpha_{2}}\leq t)={\textbf q}_{m+1}^{\alpha_{1},\alpha_{2}}(t)$. 

As $\{\mathcal{N}(t)\}_{t\geq0}$ is a Markov process, $\{\mathcal{N}^{\alpha_{1},\alpha_{2}}(t)\}_{t\geq0}$ is a semi-Markov process. Let $T_{n}$ be its $n$th jump time. Then, discrete time process $\mathcal{N}^{\alpha_{1},\alpha_{2}}(T_{n})$ is a time-homogeneous Markov chain (see Gikhman and Skorohod (1975)). Thus, the inter-arrival times of mixed time-changed Erlang queue $\{\mathcal{Q}^{\alpha_{1},\alpha_{2}}(t)\}_{t\geq0}$ are distributed according to $T^{\alpha_{1},\alpha_{2}}$.

Now, we show that the state probabilities ${\textbf q}_{m}^{\alpha_{1},\alpha_{2}}(t)$ and ${\textbf q}_{m+1}^{\alpha_{1},\alpha_{2}}(t)$ of $\{\mathcal{N}_{*}^{\alpha_{1},\alpha_{2}}(t)\}_{t\geq0}$ solves the following differential equations:
\begin{align}
	\Big(c_{1}\frac{\mathrm{d}^{\alpha_{1}}}{\mathrm{d}t^{\alpha_{1}}}+c_{2}\frac{\mathrm{d}^{\alpha_{2}}}{\mathrm{d}t^{\alpha_{2}}}\Big){\textbf q}_{m}^{\alpha_{1},\alpha_{2}}(t)&=-\lambda{\textbf q}_{m}^{\alpha_{1},\alpha_{2}}(t),\label{tqm12}\\
	\Big(c_{1}\frac{\mathrm{d}^{\alpha_{1}}}{\mathrm{d}t^{\alpha_{1}}}+c_{2}\frac{\mathrm{d}^{\alpha_{2}}}{\mathrm{d}t^{\alpha_{2}}}\Big){\textbf q}_{m+1}^{\alpha_{1},\alpha_{2}}(t)&=\lambda{\textbf q}_{m}^{\alpha_{1},\alpha_{2}}(t)\label{tqmm12},
\end{align}
with ${\textbf q}_{m}^{\alpha_{1},\alpha_{2}}(0)=1$ and ${\textbf q}_{m+1}^{\alpha_{1},\alpha_{2}}(0)=0$.

The Eq. \eqref{tqm12} holds true if and only if the Laplace transform $\tilde{\textbf{q}}_{m}^{\alpha_{1},\alpha_{2}}(z)$ of ${\textbf q}_{m}^{\alpha_{1},\alpha_{2}}(t)$ solves:
\begin{equation}\label{qmDElp12}
	(c_{1}z^{\alpha_{1}-1}+c_{2}z^{\alpha_{2}-1})(z\tilde{{\textbf q}}_{m}^{\alpha_{1},\alpha_{2}}(z)-1)=-\lambda\tilde{{\textbf q}}_{m}^{\alpha_{1},\alpha_{2}}(z),\,z>0,
\end{equation}
where we have used \eqref{caputolp}.

On taking the Laplace transform on both sides of
\begin{equation*}\label{qm12}
	{\textbf q}_{m}^{\alpha_{1},\alpha_{2}}(t)=\mathrm{Pr}(\mathcal{N}_{*}(Y_{\alpha_{1},\alpha_{2}}(t))=m)
	=\int_{0}^{\infty}{\textbf q}_{m}(y)\mathrm{Pr}\{Y_{\alpha_{1},\alpha_{2}}(t)\in \mathrm{d}y\}
\end{equation*}
and by using \eqref{mixedlp12}, we get 
\begin{equation}\label{lpqm12}
	\tilde{{\textbf q}}_{m}^{\alpha_{1},\alpha_{2}}(z)=(c_{1}z^{\alpha_{1}-1}+c_{2}z^{\alpha_{2}-1})\int_{0}^{\infty}{\textbf q}_{m}(y)e^{-y(c_{1}z^{\alpha_{1}}+c_{2}z^{\alpha_{2}})} \mathrm{d}y.
\end{equation}
On substituting \eqref{lpqm12} in \eqref{qmDElp12}, we obtain
\begin{equation}\label{qmDElp121}
    (c_{1}z^{\alpha_{1}}+c_{2}z^{\alpha_{2}})\int_{0}^{\infty}{\textbf q}_{m}(y)e^{-y(c_{1}z^{\alpha_{1}}+c_{2}z^{\alpha_{2}})} \mathrm{d}y-1=-\lambda\int_{0}^{\infty}{\textbf q}_{m}(y)e^{-y(c_{1}z^{\alpha_{1}}+c_{2}z^{\alpha_{2}})} \mathrm{d}y.
\end{equation}
By using \eqref{qmDE12} in \eqref{qmDElp121} and on solving it, we get an identity. This establishes that \eqref{tqm12} holds true. Similarly, it can be shown that \eqref{tqmm12} holds true. 

On using Eq. (2.41) of Beghin (2012), the Eq. \eqref{tqm12} along with the initial condition ${\textbf q}_{m}^{\alpha_{1},\alpha_{2}}(0)=1$ can be solved to get the required result. This completes the proof.
\end{proof}
\begin{remark}
From Beghin (2012), it follows that the Laplace transform of ${\textbf q}_{m}^{\alpha_{1},\alpha_{2}}(t)=\mathrm{Pr}(T^{\alpha_{1},\alpha_{2}}>t)$ is 
\begin{equation*}
	\tilde{{\textbf q}}_{m}^{\alpha_{1},\alpha_{2}}(z)=\frac{c_{1}z^{\alpha_{1}-1}+c_{2}z^{\alpha_{2}-1}}{\lambda+c_{1}z^{\alpha_{1}}+c_{2}z^{\alpha_{2}}}.
\end{equation*}
\end{remark}
\begin{remark}
	For $c_{1}=1$ in Theorem \ref{thminterarrival}, the distribution of inter-arrival times of mixed time-changed Erlang queue reduces to that of fractional Erlang queue (see Ascione {\it et al.} (2020), Theorem 4.1).
\end{remark}
The proof of the following result follows similar lines to that of Theorem \ref{thminterarrival}. Thus, it is omitted. 
\begin{theorem}\label{thminterphase12}
The inter-phase times $\mathcal{P}^{\alpha_{1},\alpha_{2}}$ of mixed time-changed Erlang queue are iid with the following distribution:
\begin{align}\label{intephase12}
	\mathrm{Pr}(\mathcal{P}^{\alpha_{1},\alpha_{2}}>t)&=\sum_{r=0}^{\infty}\Big(-\frac{c_{2}}{c_{1}}\Big)^{r}t^{(\alpha_{1}-\alpha_{2})r}E^{r+1}_{\alpha_{1},(\alpha_{1}-\alpha_{2})r+1}\Big(-\frac{k\mu}{c_{1}}t^{\alpha_{1}}\Big)\nonumber\\
	&\hspace{1.7cm} -\sum_{r=0}^{\infty}\Big(-\frac{c_{2}}{c_{1}}\Big)^{r+1}t^{(\alpha_{1}-\alpha_{2})(r+1)}E^{r+1}_{\alpha_{1},(\alpha_{1}-\alpha_{2})(r+1)+1}\Big(-\frac{k\mu}{c_{1}}t^{\alpha_{1}}\Big),
\end{align}
where $E_{\alpha,\beta}^{\gamma}(\cdot)$ is three parameter Mittag-Leffler function as defined in \eqref{Mittag12}.
\end{theorem}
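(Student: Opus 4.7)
The plan is to mirror the proof of Theorem \ref{thminterarrival} essentially verbatim, with the arrival rate $\lambda$ replaced throughout by the phase-completion rate $k\mu$. The structural reason this works is that each individual phase of the Erlang service is exponential with parameter $k\mu$, so between successive phase completions the only relevant event is the expiration of one such exponential clock --- exactly parallel to the single-clock situation that generates the inter-arrival times at rate $\lambda$.

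Concretely, I would first introduce an auxiliary two-state counting process $\{\mathcal{S}_{*}(t)\}_{t\geq 0}$ starting from a fixed phase index $m$ with $m+1$ as an absorbing state, whose probabilities ${\textbf r}_{m}(t)$ and ${\textbf r}_{m+1}(t)$ solve the Cauchy problem obtained from \eqref{qmDE12} upon replacing $\lambda$ by $k\mu$. Let $\mathcal{P}$ denote the time of the first phase completion in the classical Erlang queue starting from $\mathcal{S}_{*}(0)=m$, so that $F_{\mathcal{P}}(t)={\textbf r}_{m+1}(t)$ and $\mathrm{Pr}(\mathcal{P}>t)={\textbf r}_{m}(t)$. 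Time-changing by the inverse mixed stable subordinator then produces $\mathcal{S}_{*}^{\alpha_{1},\alpha_{2}}(t)=\mathcal{S}_{*}(Y_{\alpha_{1},\alpha_{2}}(t))$, and the semi-Markov argument of Gikhman and Skorohod (1975) invoked in Theorem \ref{thminterarrival} applies without modification to yield that the inter-phase times of $\{\mathcal{Q}^{\alpha_{1},\alpha_{2}}(t)\}_{t\geq 0}$ are iid copies of $\mathcal{P}^{\alpha_{1},\alpha_{2}}$, with $\mathrm{Pr}(\mathcal{P}^{\alpha_{1},\alpha_{2}}>t)={\textbf r}_{m}^{\alpha_{1},\alpha_{2}}(t)$.

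The heart of the calculation is to verify that ${\textbf r}_{m}^{\alpha_{1},\alpha_{2}}(t)$ solves the fractional equation
\begin{equation*}
\Big(c_{1}\frac{\mathrm{d}^{\alpha_{1}}}{\mathrm{d}t^{\alpha_{1}}}+c_{2}\frac{\mathrm{d}^{\alpha_{2}}}{\mathrm{d}t^{\alpha_{2}}}\Big){\textbf r}_{m}^{\alpha_{1},\alpha_{2}}(t)=-k\mu \, {\textbf r}_{m}^{\alpha_{1},\alpha_{2}}(t),\quad {\textbf r}_{m}^{\alpha_{1},\alpha_{2}}(0)=1.
\end{equation*}
This proceeds exactly as in \eqref{qmDElp12}--\eqref{qmDElp121}: take the Laplace transform using \eqref{caputolp}, insert the subordination integral ${\textbf r}_{m}^{\alpha_{1},\alpha_{2}}(t)=\int_{0}^{\infty}{\textbf r}_{m}(y)\mathrm{Pr}\{Y_{\alpha_{1},\alpha_{2}}(t)\in\mathrm{d}y\}$ together with \eqref{mixedlp12}, and reduce to an identity that follows from the classical ODEs at rate $k\mu$. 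Applying Eq. (2.41) of Beghin (2012) to this equation then delivers the two-series expression \eqref{intephase12}.

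The only step requiring genuine care is matching Beghin's series solution with the precise form in \eqref{intephase12}: one must unpack the two nested sums and identify the three-parameter Mittag-Leffler coefficients with those one would obtain by literally substituting $k\mu$ for $\lambda$ in \eqref{intrarr12}. Beyond this bookkeeping, every step is in one-to-one correspondence with the inter-arrival argument, which is why the authors elect to omit the details.
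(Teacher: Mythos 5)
Your proposal is correct and coincides with the paper's intended argument: the paper omits the proof of Theorem \ref{thminterphase12} precisely because it follows the proof of Theorem \ref{thminterarrival} verbatim with the rate $\lambda$ replaced by the phase-completion rate $k\mu$, which is exactly the substitution you carry out via the auxiliary absorbing process, the subordination/Laplace-transform verification of the fractional relaxation equation, and Beghin's Eq.\ (2.41). No gaps.
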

\begin{remark}
	For $c_{1}=1$ in Theorem \ref{thminterphase12}, the distribution of inter-phase times of mixed time-changed Erlang queue reduces to that of fractional Erlang queue (see Ascione {\it et al.} (2020), Theorem 4.2).
\end{remark}
\begin{remark}\label{2cor12}
	Let $Y_{1},Y_{2},\dots, Y_{k}$ be the inter-phase times of mixed time changed Erlang queue and $f_{Y_{1}}(\cdot)$ be the probability density function of $Y_{1}$, that is, $f_{Y_{1}}(t)=\frac{\mathrm{d}}{\mathrm{d}t}\mathrm{Pr}(Y_{1}\leq t)$. It is given by (see Beghin (2012), Eq. (2.38))
	\begin{equation*}
		f_{Y_{1}}(t)=\frac{k\mu}{c_{1}} t^{\alpha_{1}-1}\sum_{r=0}^{\infty}\Big(-\frac{c_{2}}{c_{1}}\Big)^{r}t^{(\alpha_{1}-\alpha_{2})r}E^{r+1}_{\alpha_{1},\alpha_{1}+(\alpha_{1}-\alpha_{2})r}\Big(-\frac{k\mu t^{\alpha_{1}}}{c_{1}}\Big).
	\end{equation*}
	Let $X=Y_{1}+Y_{2}+\dots +Y_{k}$. As $Y_{i}$'s are iid, we have 
	\begin{equation}\label{distX12}
		f_{X}(t)=\frac{\mathrm{d}}{\mathrm{d}t}\mathrm{Pr}(X\leq t)=f_{Y_{1}}^{*(k)}(t).
	\end{equation}
	So, its Laplace transform is (see Beghin (2012), Eq. (2.37))
	\begin{equation}\label{servlp12}
		\tilde{f}_{X}(z)=(\tilde{f}_{Y_{1}}(z))^{k}=\Big(\frac{k\mu}{k\mu+c_{1}z^{\alpha_{1}}+c_{2}z^{\alpha_{2}}}\Big)^{k}.
	\end{equation} 
Hence, the service times of mixed time-changed Erlang queue are iid with the probability density function as given in \eqref{distX12}. 

For $c_{1}=1$ or $c_{2}=1$ in \eqref{servlp12}, it reduces to the Laplace transform of service times of fractional Erlang queue (see Ascione {\it et al.} (2020), Corollary 4.3).
\end{remark}

Recall that the sojourn time of a process is the time spent in a particular state before its transition to another state.

The proof of the following result follows along the similar lines to that of Theorem \ref{thminterarrival}. Thus, it is omitted. Here, we need to work with the state probabilities of queue length process $\{\mathcal{L}^{\alpha_{1},\alpha_{2}}(t)\}_{t\geq0}$. 
\begin{theorem}\label{thmsojourn}
The sojourn times $S^{\alpha_{1},\alpha_{2}}$ of queue length process $\{\mathcal{L}^{\alpha_{1},\alpha_{2}}(t)\}_{t\geq0}$ in a state $n>0$ are iid with the following distribution function:
\begin{align}\label{sojourn12}
	\mathrm{Pr}(S^{\alpha_{1},\alpha_{2}}>t)&=\sum_{r=0}^{\infty}\Big(-\frac{c_{2}}{c_{1}}\Big)^{r}t^{(\alpha_{1}-\alpha_{2})r}E^{r+1}_{\alpha_{1},(\alpha_{1}-\alpha_{2})r+1}\Big(-\frac{(\lambda+k\mu)}{c_{1}}t^{\alpha_{1}}\Big)\nonumber\\
	&\hspace{0.5cm} -\sum_{r=0}^{\infty}\Big(-\frac{c_{2}}{c_{1}}\Big)^{r+1}t^{(\alpha_{1}-\alpha_{2})(r+1)}E^{r+1}_{\alpha_{1},(\alpha_{1}-\alpha_{2})(r+1)+1}\Big(-\frac{(\lambda+k\mu)}{c_{1}}t^{\alpha_{1}}\Big),
\end{align}
where $E_{\alpha,\beta}^{\gamma}(\cdot)$ is three parameter Mittag-Leffler function defined in \eqref{Mittag12}.
\end{theorem}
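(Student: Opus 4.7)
My plan is to follow the template of Theorem \ref{thminterarrival}, using the queue length process $\{\mathcal{L}(t)\}_{t\geq 0}$ in place of $\{\mathcal{N}(t)\}_{t\geq 0}$. The first step is to read off from \eqref{DEL12} that for every $n\geq 1$ the total exit rate of the classical queue length process from state $n$ equals $\lambda+k\mu$: the phase-completion transition $n\to n-1$ occurs at rate $k\mu$ and the arrival transition $n\to n+k$ occurs at rate $\lambda$, both being available whether $1\leq n\leq k-1$ or $n\geq k$, as reflected by the uniform coefficient $-(\lambda+k\mu)$ on the right-hand side of \eqref{DEL12}.

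Next, I would introduce an auxiliary $\mathbb{N}_0$-valued process $\{\mathcal{L}_*(t)\}_{t\geq 0}$ analogous to $\{\mathcal{N}_*(t)\}_{t\geq 0}$ in the proof of Theorem \ref{thminterarrival}, whose state probabilities solve
\begin{align*}
\frac{\mathrm{d}}{\mathrm{d}t}\mathbf{q}_n(t) &= -(\lambda+k\mu)\,\mathbf{q}_n(t), \\
\frac{\mathrm{d}}{\mathrm{d}t}\mathbf{q}_{n-1}(t) &= k\mu\,\mathbf{q}_n(t), \\
\frac{\mathrm{d}}{\mathrm{d}t}\mathbf{q}_{n+k}(t) &= \lambda\,\mathbf{q}_n(t),
\end{align*}
with all other state probabilities identically zero and with $\mathbf{q}_n(0)=1$, so that $n-1$ and $n+k$ act as absorbing states and $\mathbf{q}_n(t)=\mathrm{Pr}(S>t)$ for the sojourn time $S$ of $\mathcal{L}$ in state $n$. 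I then time-change by the inverse mixed stable subordinator, setting $\mathcal{L}_*^{\alpha_1,\alpha_2}(t)=\mathcal{L}_*(Y_{\alpha_1,\alpha_2}(t))$ and writing $\mathbf{q}_n^{\alpha_1,\alpha_2}(t)=\int_0^\infty \mathbf{q}_n(y)\,\mathrm{Pr}\{Y_{\alpha_1,\alpha_2}(t)\in \mathrm{d}y\}$. A Laplace-transform argument using \eqref{mixedlp12} and \eqref{caputolp}, exactly mirroring the passage from \eqref{lpqm12} to \eqref{qmDElp121}, then gives the fractional Cauchy problem
\begin{equation*}
\Big(c_1\frac{\mathrm{d}^{\alpha_1}}{\mathrm{d}t^{\alpha_1}}+c_2\frac{\mathrm{d}^{\alpha_2}}{\mathrm{d}t^{\alpha_2}}\Big)\mathbf{q}_n^{\alpha_1,\alpha_2}(t)=-(\lambda+k\mu)\,\mathbf{q}_n^{\alpha_1,\alpha_2}(t),\qquad \mathbf{q}_n^{\alpha_1,\alpha_2}(0)=1.
\end{equation*}

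Since $\{\mathcal{L}(t)\}_{t\geq 0}$ is a continuous-time Markov chain, $\{\mathcal{L}^{\alpha_1,\alpha_2}(t)\}_{t\geq 0}$ is semi-Markov with a time-homogeneous embedded jump chain (Gikhman and Skorohod (1975)), so its successive sojourn times in state $n$ are iid and distributed as $S^{\alpha_1,\alpha_2}$ with $\mathrm{Pr}(S^{\alpha_1,\alpha_2}>t)=\mathbf{q}_n^{\alpha_1,\alpha_2}(t)$. Applying Eq.\ (2.41) of Beghin (2012) to the displayed Cauchy problem, precisely as in the closing step of Theorem \ref{thminterarrival} but with $\lambda$ replaced by $\lambda+k\mu$, produces the double series \eqref{sojourn12} of three-parameter Mittag-Leffler functions. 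The only part of the argument that genuinely needs to be checked---the remainder being a mechanical transcription---is the opening observation that $\lambda+k\mu$ is the correct total exit rate simultaneously in the regimes $1\leq n\leq k-1$ and $n\geq k$.
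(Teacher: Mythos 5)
Your proposal is correct and is precisely the argument the paper intends: the paper omits the proof of Theorem \ref{thmsojourn}, stating only that it follows the lines of Theorem \ref{thminterarrival} applied to the queue length process, and you have filled in exactly those steps — identifying the uniform exit rate $\lambda+k\mu$ from every state $n>0$ in \eqref{DEL12}, building the absorbing auxiliary process, time-changing it, and solving the resulting fractional relaxation equation via Beghin (2012), Eq.\ (2.41). No gaps.
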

\begin{remark}
	For $c_{1}=1$ in Theorem \ref{thmsojourn}, the sojourn time of queue length process of mixed time-changed Erlang queue reduces to that of fractional Erlang queue (see Ascione {\it et al.} (2020), Theorem 4.4).
\end{remark}
\begin{remark}
The sojourn time $S^{\alpha_{1},\alpha_{2}}$ of queue length process of mixed time-changed Erlang queue in a state $0$ is the arrival time of first customer in an empty system. So, it is distributed according to \eqref{intrarr12}.
\end{remark}
\begin{remark}
	Recall that the mixed time-changed Erlang queue is a generalization of fractional Erlang queue, for $c_{1}=1$ or $c_{2}=1$ it reduces to fractional Erlang queue. As in fractional Erlang queue inter-arrival and inter-phase times are not independent (see Ascione {\it et al.} (2020), Corollary 4.6),
	the inter-arrival and inter-phase times of mixed time-changed Erlang queue are not necessarily independent. 
\end{remark}
\subsection{Distribution of busy period}
The busy period of queue is defined as a duration of time since the first customer enters the empty system till the system becomes empty again.

\begin{theorem}\label{thmbusy12}
Let $\mathcal{B}^{\alpha_{1},\alpha_{2}}$ be the busy period of mixed time-changed Erlang queue, and let $F_{\mathcal{B}^{\alpha_{1},\alpha_{2}}}(t)=\mathrm{Pr}(\mathcal{B}^{\alpha_{1},\alpha_{2}}\leq t)$ be its distribution function. Then
\begin{equation*}
	F_{\mathcal{B}^{\alpha_{1},\alpha_{2}}}(t)=\sum_{h=0}^{\infty}\frac{k\mu A_{k,h}^{0}}{c_{1}^{a_{k,h}^{0}}}h_{a_{k,h}^{0}}^{*(a_{k,h}^{0})}(t),
\end{equation*}
where $a_{k,h}^{0}$, $A_{k,h}^{0}$ and $h_{a_{k,h}^{0}}(\cdot)$ are as given in \eqref{amr12}, \eqref{Amr12} and \eqref{hN12}, respectively.
\end{theorem}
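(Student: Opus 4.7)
The plan is to represent $\mathcal{B}^{\alpha_1,\alpha_2}$ via a subordination identity in terms of the busy period $\mathcal{B}$ of the underlying (non-time-changed) Erlang queue, transfer everything to the Laplace domain, and then invert using \eqref{rholp}. Because $\mathcal{Q}^{\alpha_1,\alpha_2}(t)=\mathcal{Q}(Y_{\alpha_1,\alpha_2}(t))$ with $Y_{\alpha_1,\alpha_2}$ continuous and nondecreasing and independent of $\mathcal{Q}$, the first return of $\mathcal{Q}^{\alpha_1,\alpha_2}$ to $(0,0)$ after a busy period opens is exactly the first $t$ with $Y_{\alpha_1,\alpha_2}(t)\geq\mathcal{B}$. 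Conditioning on $Y_{\alpha_1,\alpha_2}(t)$ then yields
\begin{equation*}
F_{\mathcal{B}^{\alpha_1,\alpha_2}}(t)=\Pr(\mathcal{B}\leq Y_{\alpha_1,\alpha_2}(t))=\int_{0}^{\infty}F_{\mathcal{B}}(y)\,\Pr\{Y_{\alpha_1,\alpha_2}(t)\in\mathrm{d}y\}.
\end{equation*}

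Taking the Laplace transform in $t$, exchanging integrals and applying \eqref{mixedlp12} together with the elementary identity $\tilde{F}_{\mathcal{B}}(s)=\tilde{f}_{\mathcal{B}}(s)/s$ would give
\begin{equation*}
\tilde{F}_{\mathcal{B}^{\alpha_1,\alpha_2}}(z)=\frac{1}{z}\,\tilde{f}_{\mathcal{B}}(c_{1}z^{\alpha_{1}}+c_{2}z^{\alpha_{2}}).
\end{equation*}
At this point I would invoke the classical closed form for the Laplace transform of the busy-period density of the Erlang queue (see Griffiths et al. (2006), Ascione et al. (2020)):
\begin{equation*}
\tilde{f}_{\mathcal{B}}(s)=\sum_{h=0}^{\infty}\frac{k\mu\,A_{k,h}^{0}}{(s+\lambda+k\mu)^{a_{k,h}^{0}}},
\end{equation*}
where the constants coincide with those in \eqref{amr12} and \eqref{Amr12} evaluated at $m=k$. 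Substituting $s=c_{1}z^{\alpha_{1}}+c_{2}z^{\alpha_{2}}$ and factoring $c_{1}^{a_{k,h}^{0}}$ out of each denominator, the $h$th summand becomes
\begin{equation*}
\frac{k\mu\,A_{k,h}^{0}}{c_{1}^{a_{k,h}^{0}}}\cdot\frac{z^{-1}}{\bigl(z^{\alpha_{1}}+\tfrac{c_{2}}{c_{1}}z^{\alpha_{2}}+\tfrac{\lambda+k\mu}{c_{1}}\bigr)^{a_{k,h}^{0}}}.
\end{equation*}

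The last step is term-by-term inversion. Writing $N=a_{k,h}^{0}$ and applying \eqref{rholp} with $\rho=(N-1)/N$, $\alpha=\alpha_{1}$, $\beta=\alpha_{2}$, $a=c_{2}/c_{1}$ and $b=(\lambda+k\mu)/c_{1}$ identifies
\begin{equation*}
\tilde{h}_{N}(z)=\frac{z^{-1/N}}{z^{\alpha_{1}}+\tfrac{c_{2}}{c_{1}}z^{\alpha_{2}}+\tfrac{\lambda+k\mu}{c_{1}}},
\end{equation*}
for $h_{N}$ as in \eqref{hN12}, so the $N$-fold convolution satisfies $\tilde{h}_{N}^{*(N)}(z)=z^{-1}/(z^{\alpha_{1}}+(c_{2}/c_{1})z^{\alpha_{2}}+(\lambda+k\mu)/c_{1})^{N}$. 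Matching this factor in each summand of $\tilde{F}_{\mathcal{B}^{\alpha_1,\alpha_2}}(z)$ and summing over $h$ delivers the claimed expression.

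The principal obstacle is the subordination step: one must justify carefully that the plateaus of $Y_{\alpha_1,\alpha_2}$ (which, being the inverse of a pure-jump Lévy subordinator, is continuous but flat on the overshoots of $D_{\alpha_1,\alpha_2}$) do not disturb the identification $\mathcal{B}^{\alpha_1,\alpha_2}=\inf\{t:Y_{\alpha_1,\alpha_2}(t)\geq\mathcal{B}\}$, because any such plateau occurring during a busy interval of $\mathcal{Q}$ only prolongs, but does not alter, the first hitting of $0$ in the time-changed clock. Once this representation is in place, all remaining manipulations are routine Laplace computations using \eqref{mixedlp12} and \eqref{rholp}.
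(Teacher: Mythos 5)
Your proposal is correct and follows essentially the same route as the paper: both rest on the subordination identity $F_{\mathcal{B}^{\alpha_1,\alpha_2}}(t)=\int_0^\infty F_{\mathcal{B}}(y)\,\mathrm{Pr}\{Y_{\alpha_1,\alpha_2}(t)\in\mathrm{d}y\}$, pass to the Laplace domain via \eqref{mixedlp12}, insert the known series for the Erlang busy period, and invert term by term with \eqref{rholp} to recognize $h_{a_{k,h}^{0}}^{*(a_{k,h}^{0})}$. The only cosmetic difference is that the paper justifies the subordination step by introducing an auxiliary absorbing queue-length process $\mathcal{L}_*$ and its embedded Markov chain rather than your pathwise first-hitting argument, and it carries the integral form of $F_{\mathcal{B}}$ through the computation instead of using $\tilde F_{\mathcal{B}^{\alpha_1,\alpha_2}}(z)=z^{-1}\tilde f_{\mathcal{B}}(c_1z^{\alpha_1}+c_2z^{\alpha_2})$ directly.
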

\begin{proof}
Let us consider a process $\{\mathcal{L}_{*}(t)\}_{t\geq0}$ 
whose state probabilities ${\textbf p}_{n}(t)=\mathrm{Pr}(\mathcal{L}_{*}(t)=n|\mathcal{L}_{*}(0)=k)$, $n\geq0$ solve the following system of differential equations (see Luchak (1958), Eq. (36)-Eq. (38)):
\begin{align*}
	\frac{\mathrm{d}}{\mathrm{d}t}{\textbf p}_{0}(t)&=k\mu {\textbf p}_{1}(t),\\
	\frac{\mathrm{d}}{\mathrm{d}t}{\textbf p}_{n}(t)&=-(\lambda+k\mu){\textbf p}_{n}(t)+k\mu{\textbf p}_{n+1}(t),\,1\leq n \leq k,\\
	\frac{\mathrm{d}}{\mathrm{d}t}{\textbf p}_{n}(t)&=(\lambda+k\mu){\textbf p}_{n}(t)+k\mu{\textbf p}_{n+1}(t)+\lambda{\textbf p}_{n-k}(t),\,n\geq k+1,
\end{align*}
with the initial condition ${\textbf p}_{k}(0)=1$.

Note that the process $\{\mathcal{L}_{*}(t)\}_{t\geq0}$ behaves similar to $\{\mathcal{L}(t)\}_{t\geq0}$ except that it starts from $k$ instead of 0, which indicates that the first customer entered the system, and 0 is its absorbing state. Thus, by construction, the distribution of busy period of Erlang queue is $F_{\mathcal{B}}(t)={\textbf p}_{0}(t)$, where $F_{\mathcal{B}}(t)=\mathrm{Pr}(\mathcal{B}\leq t)$ is given in Luchak (1958). 

Let us define $\mathcal{L}_{*}^{\alpha_{1},\alpha_{2}}(t)\coloneqq\mathcal{L}_{*}(Y_{\alpha_{1},\alpha_{2}}(t))$, $t\geq0$ and denote the $n$th jump time of $\{\mathcal{L}^{\alpha_{1},\alpha_{2}}(t)\}_{t\geq0}$ by $T_{n}$. Then, $\mathcal{L}^{\alpha_{1},\alpha_{2}}(T_{n})$ is time-homogeneous Markov chain. So, $F_{\mathcal{B}^{\alpha_{1},\alpha_{2}}}(t)={\textbf p}_{0}^{\alpha_{1},\alpha_{2}}(t)$. Thus,
\begin{equation}\label{busy12}
	F_{\mathcal{B}^{\alpha_{1},\alpha_{2}}}(t)=\int_{0}^{\infty}F_{\mathcal{B}}(y)\mathrm{Pr}\{Y_{\alpha_{1},\alpha_{2}}(t)\in \mathrm{d}y\}.
\end{equation}
On taking the Laplace transform of \eqref{busy12}, we get
\begin{align}
	\tilde{F}_{\mathcal{B}^{\alpha_{1},\alpha_{2}}}(z)&=(c_{1}z^{\alpha_{1}-1}+c_{2}z^{\alpha_{2}-1})\int_{0}^{\infty}F_{\mathcal{B}}(y)e^{-y(c_{1}z^{\alpha_{1}}+c_{2}z^{\alpha_{2}})}\mathrm{d}y\nonumber\\
	&=(c_{1}z^{\alpha_{1}-1}+c_{2}z^{\alpha_{2}-1})\sum_{h=0}^{\infty}\frac{k\lambda^{h}(k\mu)^{k(h+1)}}{h!(hk+k)!}\int_{0}^{\infty}\int_{0}^{y}u^{k+h(k+1)-1}e^{-(\lambda+k\mu)u}\nonumber\\
	&\hspace{4.2cm}e^{-y(c_{1}z^{\alpha_{1}}+c_{2}z^{\alpha_{2}})}\mathrm{d}u\mathrm{d}y \,\,\,(\text{see Ascione {\it et al.} (2020), Eq. (13)})\nonumber\\
	&=(c_{1}z^{\alpha_{1}-1}+c_{2}z^{\alpha_{2}-1})\sum_{h=0}^{\infty}\frac{k\lambda^{h}(k\mu)^{k(h+1)}}{h!(hk+k)!}\int_{0}^{\infty}u^{k+h(k+1)-1}e^{-(\lambda+k\mu)u}\nonumber\\
	&\hspace{9.2cm}\int_{u}^{\infty}e^{-y(c_{1}z^{\alpha_{1}}+c_{2}z^{\alpha_{2}})}\mathrm{d}y\mathrm{d}u\nonumber\\
	&=\frac{1}{z}\sum_{h=0}^{\infty}\frac{k\lambda^{h}(k\mu)^{k(h+1)}}{h!(hk+k)!}\int_{0}^{\infty}u^{k+h(k+1)-1}e^{-(\lambda+k\mu+c_{1}z^{\alpha_{1}}+c_{2}z^{\alpha_{2}})u}\mathrm{d}u\nonumber\\
	&=\sum_{h=0}^{\infty}\frac{k\lambda^{h}(k\mu)^{k(h+1)}(k+h(k+1)-1)!}{h!(hk+k)!}\frac{z^{-1}}{(\lambda+k\mu+c_{1}z^{\alpha_{1}}+c_{2}z^{\alpha_{2}})^{k+h(k+1)}}.\label{busy121}
\end{align}
On taking the inverse Laplace transform of \eqref{busy121}, we obtain
\begin{equation}\label{busy1212}
	F_{\mathcal{B}^{\alpha_{1},\alpha_{2}}}(t)=\sum_{h=0}^{\infty}\frac{k\mu A_{k,h}^{0}}{c_{1}^{a_{k,h}^{0}}}\mathbb{L}^{-1}\bigg(\bigg(\frac{z^{\frac{-1}{a_{k,h}^{0}}}}{z^{\alpha_{1}}+\frac{c_{2}}{c_{1}}z^{\alpha_{2}}+\frac{\lambda+k\mu}{c_{1}}}\bigg)^{a_{k,h}^{0}};t\bigg).
\end{equation}
finally, on using \eqref{rholp} in \eqref{busy1212}, we get the required result.
\end{proof}
\begin{remark}
	For $c_{1}=1$ in Theorem \ref{thmbusy12}, the distribution of busy period $\mathcal{B}^{\alpha_{1},\alpha_{2}}$ of mixed time-changed Erlang queue reduces to that of fractional Erlang queue (see Ascione {\it et al.} (2020), Theorem 6.3).
\end{remark}
\subsection{Conditional waiting time distribution}
For a single channel queue, the distribution of waiting time is obtained by using the Markov property (see Gaver (1954)). However, it cannot be used to obtain the distribution of waiting time for mixed time-changed Erlang queue because of its semi-Markovian nature. But we can give some information about its waiting time by using some sort of conditioning argument, as done in Ascione {\it et al.} (2020).

The following distribution function will be used: ${\psi^{\alpha_{1},\alpha_{2}}}(t)
=\mathrm{Pr}(\mathcal{P}^{\alpha_{1},\alpha_{2}}>t)$.
It is given in \eqref{intephase12}.
Let $Y$ be a random variable with the following distribution function:
\begin{align}\label{rvY12}
	F_{Y}(t)=\mathrm{Pr}(\mathcal{P}^{\alpha_{1},\alpha_{2}}\leq t_{0}+t|\mathcal{P}^{\alpha_{1},\alpha_{2}}\geq t_{0})=1-\frac{\psi^{\alpha_{1},\alpha_{2}}(t_{0}+t)}{\psi^{\alpha_{1},\alpha_{2}}(t_{0})},
\end{align} 	
with parameters $t_{0}$ and $k\mu$.

On taking the Laplace transform of $f_{\psi}(t)=(t_{0}+t)^{r}$, $r>0$, we obtain
\begin{align}
	\tilde{f}_{\psi}(z)&=\int_{0}^{\infty}e^{-zt}(t_{0}+t)^{r}\mathrm{d}t\nonumber\\
	&=e^{zt_{0}}\int_{t_{0}}^{\infty}e^{-zy}y^{r}\mathrm{d}y\nonumber\\
	&=\frac{e^{zt_{0}}}{z^{r+1}}\Gamma(r+1,zt_{0}),\label{lpgamma12}
\end{align}
where $\Gamma(y,z)$ is the analytic extension of the upper incomplete gamma function on $\mathbb{C}^{2}$ (see Gradshteyn {\it et al.} (2007)).
Also, let $f_{Y}(t)$ be the probability density function of $Y$. Then, \begin{align}\label{densityrvY12}
	f_{Y}(t)=-\frac{1}{\psi^{\alpha_{1},\alpha_{2}}(t_{0})}\frac{\mathrm{d}}{\mathrm{d}t}\psi^{\alpha_{1},\alpha_{2}}(t_{0}+t).
\end{align}
On taking the Laplace transform of  \eqref{densityrvY12} and by using \eqref{lpgamma12}, we get
{\small\begin{align}\label{lpgrml12}
	\tilde{f}_{Y}(z)
	&=1-\Bigg(\sum_{r=0}^{\infty}\sum_{m=0}^{\infty}\Big(-\frac{c_{2}}{c_{1}}\Big)^{r}
	\Big(-\frac{k\mu}{c_{1}}\Big)^{m}
	 \frac{(m+r)!\Gamma((\alpha_{1}-\alpha_{2})r+\alpha_{1}m+1,zt_{0})}{r!m!\Gamma(\alpha_{1}m+(\alpha_{1}-\alpha_{2})r+1)}z^{(\alpha_{2}-\alpha_{1})r-\alpha_{1}m}\nonumber\\
	 &\hspace{1.5cm} -\sum_{r=0}^{\infty}\sum_{m=0}^{\infty}\Big(-\frac{c_{2}}{c_{1}}\Big)^{r+1}
		\Big(-\frac{k\mu}{c_{1}}\Big)^{m}\frac{(m+r)!}{r!m!}\nonumber\\
		& \hspace{1.8cm}\cdot
		\frac{\Gamma((\alpha_{1}-\alpha_{2})(r+1)+\alpha_{1}m+1,zt_{0})}{\Gamma(\alpha_{1}m+(\alpha_{1}-\alpha_{2})(r+1)+1)} z^{(\alpha_{2}-\alpha_{1})(r+1)-\alpha_{1}m}\Bigg)\frac{e^{zt_{0}}}{\psi^{\alpha_{1},\alpha_{2}}(t_{0})},
\end{align}}
where we have used  \eqref{Mittag12} and \eqref{intephase12}.

Let $W^{\alpha_{1},\alpha_{2}}_{t}$ be the time that a customer who enters the system at time $t > 0$ has to spend in the
service. Consider the following random set:
\begin{equation*}
	\mathcal{E}_{t}^{\alpha_{1},\alpha_{2}}({\omega})=\{0\leq s \leq t: \mathcal{L}^{\alpha_{1},\alpha_{2}}(s^{-})(\omega)=\mathcal{L}^{\alpha_{1},\alpha_{2}}(s)(\omega)+1\},
\end{equation*}
which is the set of time instants before $t$ at which a phase has been completed. Now, let us define a random variable as follows:
\begin{equation*}
	\mathcal{N}^{\alpha_{1},\alpha_{2}}_{t}=|\mathcal{E}_{t}^{\alpha_{1},\alpha_{2}}|,
\end{equation*}
which denotes the number of phases completed before time $t$. Also, consider a random variable
\begin{equation*}
	\tau^{\alpha_{1},\alpha_{2}}_{t}=\begin{cases}
	\sup \mathcal{E}_{t}^{\alpha_{1},\alpha_{2}},\, \mathcal{N}^{\alpha_{1},\alpha_{2}}_{t}>0,\\
	t,\, \mathcal{N}^{\alpha_{1},\alpha_{2}}_{t}=0,
	\end{cases}
\end{equation*}
that is, the last time instant before $t$ at which a phase has been completed. For fixed $t$, $\tau^{\alpha_{1},\alpha_{2}}_{t}$ is a Markov time whose values are the jump times of $\{\mathcal{S}^{\alpha_{1},\alpha_{2}}(t)\}_{t\geq0}$ except when $\tau^{\alpha_{1},\alpha_{2}}_{t}=t$. As $\{\mathcal{S}^{\alpha_{1},\alpha_{2}}(t)\}_{t\geq0}$ is a semi-Markov process, its semi-regenerative set contains all the points of discontinuities, and given that $\tau^{\alpha_{1},\alpha_{2}}_{t}=t_{0}$ for some $t_{0}<t$, the past and future of $\{\mathcal{S}^{\alpha_{1},\alpha_{2}}(t)\}_{t\geq0}$ are independent (see Cinlar (1974)). Thus, for waiting time, we condition on $\{\mathcal{L}^{\alpha_{1},\alpha_{2}}(t)=n\}$ as well as on $\{\tau^{\alpha_{1},\alpha_{2}}_{t}=t_{0}\}$. Let us denote
\begin{equation*}
	w^{\alpha_{1},\alpha_{2}}(x;t,t_{0},n)\,\mathrm{d}x=\mathrm{Pr}(W_{t}^{\alpha_{1},\alpha_{2}}\in \mathrm{d}x|\tau^{\alpha_{1},\alpha_{2}}_{t}=t_{0},\mathcal{L}^{\alpha_{1},\alpha_{2}}(t)=n).
\end{equation*} 	
A customer has to wait for the completion of $n$ independent phases (since we have conditioned on  $\{\mathcal{L}^{\alpha_{1},\alpha_{2}}(t)=n\}$), one of the $n$ phases started at $t_{0}$ that has not been completed till time $t$. That is, 
\begin{equation*}
	\mathbb{E}(W^{\alpha_{1},\alpha_{2}}_{t}|\tau^{\alpha_{1},\alpha_{2}}_{t}=t_{0},\mathcal{L}^{\alpha_{1},\alpha_{2}}(t)=n)=\sum_{j=1}^{n}T_{j},
\end{equation*}
where $T_{j}$'s are $n$ independent random variables. Here, for $1\leq j\leq n-1$, $T_{j}$ denotes the duration of $j$th phase whose service is not yet started and  $T_{n}$ denotes the remaining time  duration of the phase which is in service at time $t$.
Note that $T_{j}$, $1\leq j\leq n-1$ are iid from Theorem \ref{thminterphase12}. Thus, $W=\sum_{j=1}^{n-1}T_{j}$ has the distribution given by \eqref{distX12}. For the distribution of $T_{n}$, we know that the last phase was completed at $t_{0}$ since we have conditioned on $\tau^{\alpha_{1},\alpha_{2}}_{t}=t_{0}$. So, the phase being served at time $t$ started its service at time $t_{0}$, thus $T_{n}$ is distributed as \eqref{rvY12} with parameters $t-t_{0}$ and $k\mu$. Then, we have
\begin{equation}\label{wtden12}
	w^{\alpha_{1},\alpha_{2}}(x;t,t_{0},n)=f_{{W}}*f_{T_{n}}(x).
\end{equation}
On taking the Laplace transform of \eqref{wtden12}, and using \eqref{servlp12} and \eqref{lpgrml12}, we get
\begin{align*}
	\tilde{w}(z;t,t_{0},n)
	&=\Big(\frac{k\mu}{k\mu+c_{1}z^{\alpha_{1}}+c_{2}z^{\alpha_{2}}}\Big)^{n-1}\Bigg(1-\frac{e^{z(t-t_{0})}}{\psi^{\alpha_{1},\alpha_{2}}(t-t_{0})}\Bigg(\sum_{r=0}^{\infty}\sum_{m=0}^{\infty}\Big(-\frac{c_{2}}{c_{1}}\Big)^{r}\Big(-\frac{k\mu}{c_{1}}\Big)^{m}\nonumber\\
	&\ \ \cdot\frac{(m+r)!\Gamma((\alpha_{1}-\alpha_{2})r+\alpha_{1}m+1,z(t-t_{0}))}{r!m!\Gamma(\alpha_{1}m+(\alpha_{1}-\alpha_{2})r+1)}z^{(\alpha_{2}-\alpha_{1})r-\alpha_{1}m}\nonumber\\
	&\ \ -\sum_{r=0}^{\infty}\sum_{m=0}^{\infty}\Big(-\frac{c_{2}}{c_{1}}\Big)^{r+1}
		\Big(-\frac{k\mu}{c_{1}}\Big)^{m}\frac{(m+r)!}{r!m!}\nonumber\\
		&\ \ \cdot\frac{\Gamma((\alpha_{1}-\alpha_{2})(r+1)+\alpha_{1}m+1,z(t-t_{0}))}{\Gamma(\alpha_{1}m+(\alpha_{1}-\alpha_{2})(r+1)+1)}z^{(\alpha_{2}-\alpha_{1})(r+1)-\alpha_{1}m}\Bigg)\Bigg).
\end{align*} 
\subsection{Sample paths simulation}
Here, we give plots of sample path simulations of the Mixed time-changed Erlang queue. Following lemmas will be used for this simulations.
\begin{lemma}\label{lemma1}
	Let $X$ be an exponential random variable with rate $\lambda$ and $\{D_{\alpha_{1},\alpha_{2}}(t)\}_{t\geq0}$ be mixed stable subordinator which is independent of $X$. Then, $D_{\alpha_{1},\alpha_{2}}(X)\overset{d}{=} T^{\alpha_{1},\alpha_{2}}$, where $T^{\alpha_{1},\alpha_{2}}$ is the inter-arrival time of the mixed time-changed Erlang queue.
\end{lemma}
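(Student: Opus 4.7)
The plan is to prove the identity in distribution by matching Laplace--Stieltjes transforms, which is the most direct route given everything that has already been assembled.

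First, I would compute $\mathbb{E}(e^{-zD_{\alpha_{1},\alpha_{2}}(X)})$ by conditioning on $X$ and using the Laplace exponent \eqref{mixdsub12} of the mixed stable subordinator together with the independence of $X$ and $\{D_{\alpha_{1},\alpha_{2}}(t)\}_{t\geq 0}$. This gives $\mathbb{E}(e^{-X(c_{1}z^{\alpha_{1}}+c_{2}z^{\alpha_{2}})})$, which is just the moment generating function of an $\mathrm{Exp}(\lambda)$ random variable evaluated at $-(c_{1}z^{\alpha_{1}}+c_{2}z^{\alpha_{2}})$ and therefore equals $\lambda/(\lambda+c_{1}z^{\alpha_{1}}+c_{2}z^{\alpha_{2}})$.

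Second, I would recover $\mathbb{E}(e^{-zT^{\alpha_{1},\alpha_{2}}})$ from the tail transform supplied in the remark following Theorem \ref{thminterarrival}, namely
\begin{equation*}
\mathbb{L}\bigl(\mathrm{Pr}(T^{\alpha_{1},\alpha_{2}}>t)\bigr)(z)=\frac{c_{1}z^{\alpha_{1}-1}+c_{2}z^{\alpha_{2}-1}}{\lambda+c_{1}z^{\alpha_{1}}+c_{2}z^{\alpha_{2}}}.
\end{equation*}
Applying the standard identity $\mathbb{L}(\mathrm{Pr}(T>t))(z)=z^{-1}(1-\mathbb{E}(e^{-zT}))$ valid for any non-negative $T$, a one-line algebraic simplification (the factor $z$ cancels the $z^{-1}$ in each term of the numerator) will yield $\mathbb{E}(e^{-zT^{\alpha_{1},\alpha_{2}}})=\lambda/(\lambda+c_{1}z^{\alpha_{1}}+c_{2}z^{\alpha_{2}})$, matching the expression from the first step.

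Since both Laplace--Stieltjes transforms agree, uniqueness of the Laplace transform for non-negative random variables concludes the proof. No step is really an obstacle; the only delicate piece is making sure the cancellation $z\cdot(c_{1}z^{\alpha_{1}-1}+c_{2}z^{\alpha_{2}-1})=c_{1}z^{\alpha_{1}}+c_{2}z^{\alpha_{2}}$ lines up with the mixed stable Laplace exponent, which is a routine check. A sample-path argument via the duality between $D_{\alpha_{1},\alpha_{2}}$ and $Y_{\alpha_{1},\alpha_{2}}$---viewing the first arrival time in the original Erlang queue as $X\sim\mathrm{Exp}(\lambda)$ and inverting through the subordinator to the time-changed scale---would give the same conclusion but must contend with the jump discontinuities of the subordinator, so it is less clean than the transform approach proposed above.
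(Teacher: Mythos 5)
Your proposal is correct, and it reaches the same target identity as the paper — both arguments reduce to showing that the relevant transform equals $(c_{1}z^{\alpha_{1}-1}+c_{2}z^{\alpha_{2}-1})/(\lambda+c_{1}z^{\alpha_{1}}+c_{2}z^{\alpha_{2}})$, equivalently that the Laplace--Stieltjes transform is $\lambda/(\lambda+c_{1}z^{\alpha_{1}}+c_{2}z^{\alpha_{2}})$, and then invoke uniqueness against the remark following Theorem \ref{thminterarrival}. The difference lies in how you handle $D_{\alpha_{1},\alpha_{2}}(X)$. The paper first passes to the inverse process via the duality of events $\{D_{\alpha_{1},\alpha_{2}}(X)>t\}=\{X>Y_{\alpha_{1},\alpha_{2}}(t)\}$, writes $\mathrm{Pr}\{D_{\alpha_{1},\alpha_{2}}(X)>t\}=\int_{0}^{\infty}e^{-\lambda y}\,\mathrm{Pr}\{Y_{\alpha_{1},\alpha_{2}}(t)\in\mathrm{d}y\}$, and then takes the Laplace transform in $t$ using \eqref{mixedlp12}; so it works with the tail function throughout and needs the transform of the density of $Y_{\alpha_{1},\alpha_{2}}$. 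You instead condition on $X$ directly and use only the Laplace exponent \eqref{mixdsub12} of the forward subordinator, obtaining $\mathbb{E}(e^{-zD_{\alpha_{1},\alpha_{2}}(X)})=\lambda/(\lambda+c_{1}z^{\alpha_{1}}+c_{2}z^{\alpha_{2}})$ in one line, at the cost of an extra (routine) conversion $\mathbb{L}(\mathrm{Pr}(T>t))(z)=z^{-1}(1-\mathbb{E}(e^{-zT}))$ on the $T^{\alpha_{1},\alpha_{2}}$ side. Your route is slightly more elementary in that it never touches $Y_{\alpha_{1},\alpha_{2}}$ or its density, and it sidesteps the measure-zero bookkeeping implicit in the event duality; the paper's route has the advantage of producing the tail Laplace transform in exactly the form quoted in the remark, so the match is immediate with no algebra. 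Amusingly, the ``sample-path'' alternative you set aside as less clean is essentially the first step the paper actually takes.
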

\begin{proof}
Consider the following:
\begin{align}\label{1stlema12}
	\mathrm{Pr}\{D_{\alpha_{1},\alpha_{2}}(X)>t\}&=\mathrm{Pr}\{X>Y_{\alpha_{1},\alpha_{2}}(t)\}\nonumber\\
	&=\int_{0}^{\infty}\mathrm{Pr}\{X>y\}\mathrm{Pr}\{Y_{\alpha_{1},\alpha_{2}}(t)\in \mathrm{d}y\}\nonumber\\
	&=\int_{0}^{\infty}e^{-\lambda y}\mathrm{Pr}\{Y_{\alpha_{1},\alpha_{2}}(t)\in \mathrm{d}y\}.
\end{align}
On taking the Laplace transform of \eqref{1stlema12} and by using \eqref{mixedlp12}, we obtain
\begin{equation*}\label{1stlemmalp12}
	\mathbb{L}(\mathrm{Pr}\{D_{\alpha_{1},\alpha_{2}}(X)>t\})(z)=\frac{c_{1}z^{\alpha_{1}-1}+c_{2}z^{\alpha_{2}-1}}{\lambda+c_{1}z^{\alpha_{1}}+c_{2}z^{\alpha_{2}}},
\end{equation*}
whose inversion yields the required result.
\end{proof}

\begin{lemma}\label{lemma2}
Let $X$ be a non-negative random variable independent of the mixed stable subordinator $\{D_{\alpha_{1},\alpha_{2}}(t)\}_{t\geq0}$. Then, 
$D_{\alpha_{1},\alpha_{2}}(X)\overset{d}{=}c_{1}^{1/\alpha_{1}}X^{1/\alpha_{1}}D_{\alpha_{1}}(1)+c_{2}^{1/\alpha_{2}}X^{1/\alpha_{2}}D_{\alpha_{2}}(1),$
where $D_{\alpha_{1}}(1)$ and $D_{\alpha_{2}}(1)$ are independent random variables.
\end{lemma}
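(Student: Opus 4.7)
The plan is to verify the distributional identity by showing that both sides have the same Laplace transform, conditioning on $X$ and exploiting the structure of the mixed stable subordinator. An alternative route, which I will sketch as well, is to first decompose $\{D_{\alpha_1,\alpha_2}(t)\}_{t\geq 0}$ as the independent sum of two scaled stable subordinators, then apply self-similarity at the (independent, random) time $X$.

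First I would compute the Laplace transform of the left-hand side. Conditioning on $X$ and using the defining relation \eqref{mixdsub12} for the mixed stable subordinator,
\begin{equation*}
\mathbb{E}\bigl(e^{-zD_{\alpha_1,\alpha_2}(X)}\bigr)
=\mathbb{E}\bigl[\mathbb{E}\bigl(e^{-zD_{\alpha_1,\alpha_2}(X)}\,\bigl|\,X\bigr)\bigr]
=\mathbb{E}\bigl[e^{-X(c_1 z^{\alpha_1}+c_2 z^{\alpha_2})}\bigr].
\end{equation*}
Next I would compute the Laplace transform of the right-hand side. Since $D_{\alpha_1}(1)$ and $D_{\alpha_2}(1)$ are independent and both independent of $X$, conditioning on $X$ factorizes the joint expectation, and each factor is the Laplace transform of a standard $\alpha_i$-stable subordinator evaluated at argument $z\,c_i^{1/\alpha_i}X^{1/\alpha_i}$. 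Using $\mathbb{E}(e^{-\zeta D_{\alpha_i}(1)})=e^{-\zeta^{\alpha_i}}$ with $\zeta=z\,c_i^{1/\alpha_i}X^{1/\alpha_i}$, both factors collapse cleanly to $e^{-c_i X z^{\alpha_i}}$, so the product equals $\mathbb{E}[e^{-X(c_1z^{\alpha_1}+c_2z^{\alpha_2})}]$, matching the left-hand side. Since Laplace transforms characterize distributions on $[0,\infty)$, the identity follows.

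A more conceptual presentation, which I would include as a remark, is the following. Comparing Laplace transforms one sees at once that $D_{\alpha_1,\alpha_2}(t)\overset{d}{=}c_1^{1/\alpha_1}D_{\alpha_1}(t)+c_2^{1/\alpha_2}D_{\alpha_2}(t)$ with $D_{\alpha_1},D_{\alpha_2}$ independent, because both sides have Laplace exponent $c_1z^{\alpha_1}+c_2z^{\alpha_2}$. Evaluating at the independent random time $X$ and invoking the self-similarity $D_{\alpha_i}(t)\overset{d}{=}t^{1/\alpha_i}D_{\alpha_i}(1)$ (which holds conditionally on $X$, since $D_{\alpha_i}$ is independent of $X$) immediately gives the desired representation.

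There is no real obstacle here; the only point requiring a touch of care is the use of independence between $D_{\alpha_1}(1)$, $D_{\alpha_2}(1)$ and $X$, so that conditioning on $X$ separates the joint Laplace transform into the product of two pieces and self-similarity can be applied inside each conditional expectation. The computation then closes in one line.
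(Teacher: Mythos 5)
Your argument is correct, and your primary route differs in organization from the paper's. The paper proves the lemma by conditioning on $X$ at the level of tail probabilities, invoking the decomposition $D_{\alpha_{1},\alpha_{2}}(t)\overset{d}{=}c_{1}^{1/\alpha_{1}}D_{\alpha_{1}}(t)+c_{2}^{1/\alpha_{2}}D_{\alpha_{2}}(t)$ into independent stable subordinators (cited from Aletti \emph{et al.} (2018)) and then applying self-similarity $D_{\alpha_{i}}(y)\overset{d}{=}y^{1/\alpha_{i}}D_{\alpha_{i}}(1)$ inside the mixture over the law of $X$; this is exactly the ``conceptual'' argument you relegate to a remark. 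Your main proof instead matches the conditional Laplace transforms of the two sides directly: the left side gives $\mathbb{E}[e^{-X(c_{1}z^{\alpha_{1}}+c_{2}z^{\alpha_{2}})}]$ from \eqref{mixdsub12}, and the right side factorizes by independence into $e^{-c_{1}Xz^{\alpha_{1}}}e^{-c_{2}Xz^{\alpha_{2}}}$, which is the same thing. This buys you a self-contained one-line computation that does not need the decomposition as an external fact (it is absorbed into the algebra of Laplace exponents), at the cost of being slightly less structural; the paper's version makes visible the representation that is actually used for simulation in Proposition 3.1. You are also right to flag that independence of $D_{\alpha_{1}}(1)$, $D_{\alpha_{2}}(1)$ and $X$ is the one hypothesis doing real work --- the paper uses it implicitly when it pushes self-similarity inside the integral over $\mathrm{Pr}\{X\in\mathrm{d}y\}$. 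No gap.
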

\begin{proof}
Note that,
\begin{align*}
	\mathrm{Pr}\{D_{\alpha_{1},\alpha_{2}}(X)>t\}
	&=\int_{0}^{\infty}\mathrm{Pr}\{D_{\alpha_{1},\alpha_{2}}(y)>t\}\mathrm{Pr}\{X\in\mathrm{d}y\}\nonumber\\
	&=\int_{0}^{\infty}\mathrm{Pr}\{c_{1}^{1/\alpha_{1}}D_{\alpha_{1}}(y)+c_{2}^{1/\alpha_{2}}D_{\alpha_{2}}(y)>t\}\mathrm{Pr}\{X\in\mathrm{d}y\}\nonumber\\
	&=\int_{0}^{\infty}\mathrm{Pr}\{c_{1}^{1/\alpha_{1}}y^{1/\alpha_{1}}D_{\alpha_{1}}(1)+c_{2}^{1/\alpha_{2}}y^{1/\alpha_{2}}D_{\alpha_{2}}(1)>t\}\mathrm{Pr}\{X\in\mathrm{d}y\}\nonumber\\
	&=\mathrm{Pr}\{c_{1}^{1/\alpha_{1}}X^{1/\alpha_{1}}D_{\alpha_{1}}(1)+c_{2}^{1/\alpha_{2}}X^{1/\alpha_{2}}D_{\alpha_{2}}(1)>t\},
\end{align*}
where we have used $D_{\alpha_{1},\alpha_{2}}(t)\overset{d}{=}c_{1}^{1/\alpha_{1}}D_{\alpha_{1}}(t)+c_{2}^{1/\alpha_{2}}D_{\alpha_{2}}(t)$, $t\geq0$ (see Aletti {\it et al.} (2018), p. 704) such that $\{D_{\alpha_{1}}(t)\}_{t\geq0}$ and $\{D_{\alpha_{2}}(t)\}_{t\geq0}$ are independent stable subordinators.
This proves the required result.
\end{proof}
\begin{figure}
	\centering
	\includegraphics[width=1\textwidth]{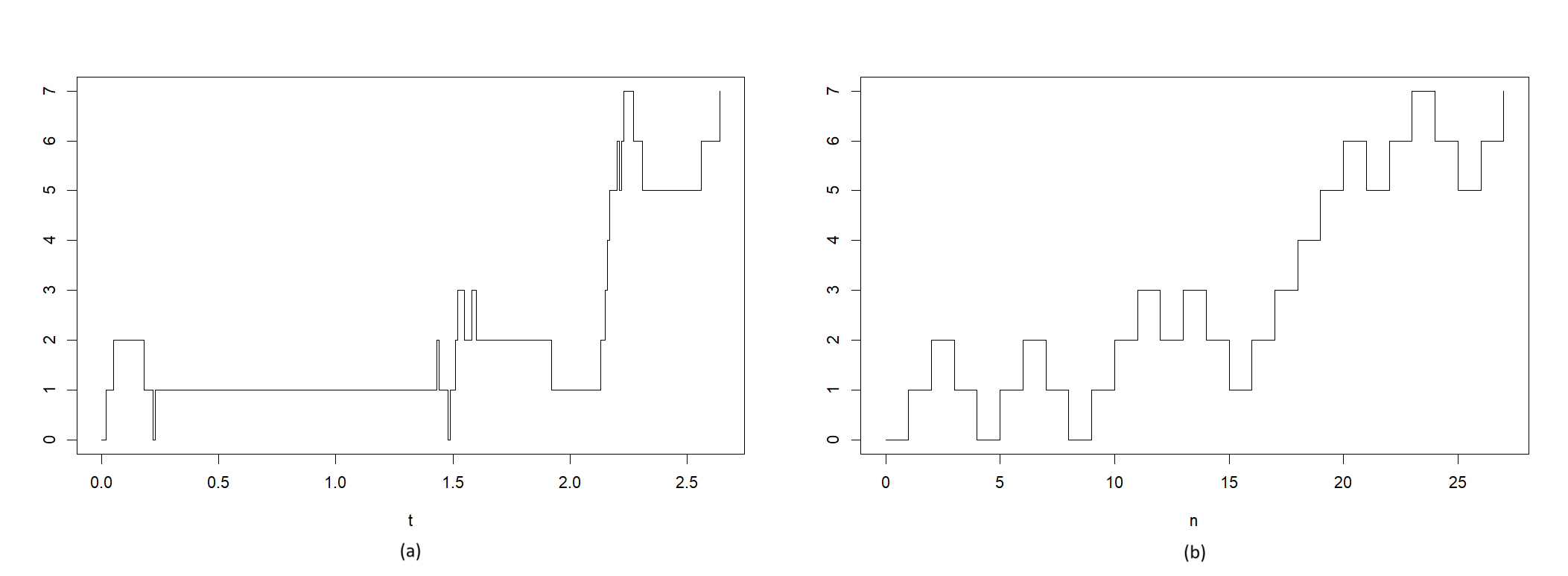}
	\caption{{\small Plot (a) represents the sample path simulation of $N^{\alpha_{1},\alpha_{2}}(t)$ and Plot (b) represents the sample path simulation of $N^{\alpha_{1},\alpha_{2}}(T_{n})$ such that $T_{n}$ are the jump times of $N^{\alpha_{1},\alpha_{2}}(t)$ for parameters $c_1=0.4$, $c_2=0.6$, $\alpha_{1}=0.5$, $\alpha_{2}=0.3$, $k=4$, $\lambda=6$ and $\mu=5$.}}
	\label{fig1} 
\end{figure}
By using Lemma \ref{lemma1} and Lemma \ref{lemma2}, we have the following result:
\begin{proposition}\label{corollary12}
Let $X$ be an exponential random variable with rate $\lambda>0$, $\{D_{\alpha_{1},\alpha_{2}}(t)\}_{t\geq0}$ be mixed stable subordinator independent of $X$, and Y be a random variable such that $Y\overset{d}{=}T^{\alpha_{1},\alpha_{2}}$. Then,
\begin{equation*}
	Y\overset{d}{=}c_{1}^{1/\alpha_{1}}X^{1/\alpha_{1}}D_{\alpha_{1}}(1)+c_{2}^{1/\alpha_{2}}X^{1/\alpha_{2}}D_{\alpha_{2}}(1),
\end{equation*}
where $D_{\alpha_{1}}(1)$ and $D_{\alpha_{2}}(1)$ are independent.
\end{proposition}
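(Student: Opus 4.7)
The proposition is essentially an immediate composition of the two preceding lemmas, so the plan is to chain them and verify that the independence/conditioning structure is compatible.

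First, I would invoke Lemma \ref{lemma1} to identify the distribution of the inter-arrival time: since $X$ is exponential with rate $\lambda$ and is independent of the mixed stable subordinator $\{D_{\alpha_{1},\alpha_{2}}(t)\}_{t\geq 0}$, we have $D_{\alpha_{1},\alpha_{2}}(X)\overset{d}{=}T^{\alpha_{1},\alpha_{2}}$. By hypothesis $Y\overset{d}{=}T^{\alpha_{1},\alpha_{2}}$, so $Y\overset{d}{=}D_{\alpha_{1},\alpha_{2}}(X)$.

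Next, I would apply Lemma \ref{lemma2} with the non-negative random variable $X$ (which is independent of $\{D_{\alpha_{1},\alpha_{2}}(t)\}_{t\geq 0}$ by assumption and which is non-negative because it is exponential). This yields
\begin{equation*}
D_{\alpha_{1},\alpha_{2}}(X)\overset{d}{=}c_{1}^{1/\alpha_{1}}X^{1/\alpha_{1}}D_{\alpha_{1}}(1)+c_{2}^{1/\alpha_{2}}X^{1/\alpha_{2}}D_{\alpha_{2}}(1),
\end{equation*}
with $D_{\alpha_{1}}(1)$ and $D_{\alpha_{2}}(1)$ independent. Combining the two distributional identities gives the claim.

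There is really no substantive obstacle here; the only thing to be careful about is that the hypotheses of the two lemmas are actually met by the random variable $X$ of the statement, namely (i) $X$ is exponential with rate $\lambda$ as required for Lemma \ref{lemma1}, and (ii) $X$ is non-negative and independent of the mixed stable subordinator as required for Lemma \ref{lemma2}. Both are granted in the hypothesis, so the proof reduces to a two-line chain of equalities in distribution with no further calculation required.
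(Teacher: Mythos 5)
Your proof is correct and is exactly the paper's argument: the proposition is stated as an immediate consequence of Lemma \ref{lemma1} and Lemma \ref{lemma2}, chained precisely as you do. Your added check that $X$ satisfies the hypotheses of both lemmas is the only content needed, and it matches the paper's (implicit) reasoning.
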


We use the modified version of Gillespie algorithm (see Cohoy {\it et al.} (2015)) for simulating the mixed time-changed Erlang queue.
It is based on the fact that $\mathcal{Q}^{\alpha_{1},\alpha_{2}}(T_{n})$ is again a Markov chain with the same transition probabilities as that of Erlang queue, where $T_{n}$ is the $n$th jump times of $\{\mathcal{Q}^{\alpha_{1},\alpha_{2}}(t)\}_{t\geq0}$. This algorithm is used by Ascione {\it et al.} (2020) for simulating the fractional Erlang queue.  The algorithm to simulate the mixed time-changed Erlang queue is similar 
to the one used for simulating fractional Erlang queue. However, we use a different distribution for a random variable $I$ given in Ascione {\it et al.} (2020). If $\mathcal{Q}^{\alpha_{1},\alpha_{2}}(T_{n})=(0,0)$ then we use Proposition \ref{corollary12} to simulate $I\overset{d}{=}T^{\alpha_{1},\alpha_{2}}$, and if $\mathcal{Q}^{\alpha_{1},\alpha_{2}}(T_{n})\in\mathcal{H}$ then we simulate $I\overset{d}{=}S^{\alpha_{1},\alpha_{2}}$, where the distributions of $T^{\alpha_{1},\alpha_{2}}$ and $S^{\alpha_{1},\alpha_{2}}$ are given in \eqref{intrarr12} and \eqref{sojourn12}, respectively. 

\section*{Acknowledgement}
The research of first author was supported by a UGC fellowship, NTA reference no. 231610158041, Govt. of India.

\end{document}